\title[The Theta Correspondence, Periods and Special Values of $L$-Functions]{The Theta Correspondence, Periods of Automorphic Forms and Special Values of Standard Automorphic $L$-Functions}
\author{Patrick Walls}
\date{\today}
\newcommand{\Z}{\mathbb{Z}}
\newcommand{\Zp}{\mathbb{Z}_p}
\newcommand{\Q}{\mathbb{Q}}
\newcommand{\Qp}{\mathbb{Q}_p}
\newcommand{\A}{\mathbb{A}}
\newcommand{\C}{\mathbb{C}}
\newcommand{\OV}{\mathrm{O}(V)}
\newcommand{\Sp}{\mathrm{Sp}}
\newcommand{\Symn}{\mathrm{Sym}_n}
\newcommand{\Spn}{\mathrm{Sp}_n}
\newcommand{\bv}{\boldsymbol{v}}
\newcommand{\bx}{\boldsymbol{x}}
\newcommand{\leftexp}[2]{{\vphantom{#2}}^{#1} \hspace{-.6mm} {#2}}
\newtheorem{theorem}{Theorem}[subsection]
\newtheorem{proposition}{Proposition}[subsection]
\newtheorem{lemma}{Lemma}[subsection]
\newtheorem*{lemma*}{Lemma}
\numberwithin{equation}{subsection}
\begin{document}

\maketitle

\tableofcontents

\section*{Introduction}

The zeros and poles of standard automorphic $L$-functions attached to representations of classical groups are linked to the nonvanishing of lifts in the theory of the theta correspondence. This deep and subtle connection is formulated in terms of the Rallis inner product formula, the Siegel-Weil formula and its extensions, and was developed over several years by many authors (for symplectic-orthogonal dual pairs see \cite{R2}, \cite{R1}, \cite{R3}, \cite{KR-SW1}, \cite{KR-SW2}, \cite{KR}, for other dual pairs see \cite{Ik}, \cite{I1}, \cite{I2}, \cite{I3}, \cite{Y1} and \cite{Y2}, and Weil's original work \cite{Weil2}). We refer to the recent work \cite{GQT} for an excellent account of this theory as well as the proof of the last remaining piece of the regularized Siegel-Weil formula in the second term range.

In this paper, we take the following as our point of departure. Let $\sigma$ be a cuspidal automorphic representation of the symplectic group $\Spn$ defined over a number field $F$. There is an entire ocean of orthogonal groups to which $\sigma$ may lift by the theta correspondence. We may imagine the collection of orthogonal groups as a collection of Witt towers (cf.~\autoref{Tower}) with orthogonal groups attached to anisotropic quadratic spaces of all different dimensions and characters at their bases. Rallis's tower property shows that $\sigma$ must lift to a nonzero cuspidal representation at some index in each Witt tower and Rallis introduced his inner product formula to derive nonvanishing criteria in terms of automorphic $L$-functions.

The results of this paper show that when a cuspidal representation $\sigma$ of $\Spn$ \emph{does} lift to a cuspidal representation $\pi = \theta_{\psi}(\sigma)$ of an orthogonal group $\OV$ (where\footnote{We have chosen to restrict to symplectic-orthogonal duals pairs $(\Spn,\OV)$ where $\mathrm{dim}_F \, V = m$ is \emph{even} for the sake of convenience and clarity. Our results apply with minor modifications to the case when $m$ is odd and $G$ is the metaplectic cover of $\Spn$, as well as to the case for unitary dual pairs.} $\mathrm{dim}_F \, V = m$ is \emph{even}), the Fourier coefficients of automorphic forms in $\sigma$ are linked to the orthogonal periods of automorphic forms in $\pi$ (cf.~\autoref{MainTheorem} and \autoref{InnerProdThm}). Consequently, when our results are combined with the Rallis inner product formula in the convergent range or the second term range (ie.~when $m > 2n+1$ or $V$ is anisotropic, cf.~(\ref{Convergent}) and (\ref{SecondTerm})), we prove a special value formula (cf.~\autoref{SpecialValue}) for the standard automorphic $L$-function $L(s,\sigma,\chi_V)$ attached to $\sigma$ (and twisted by the character $\chi_V$ of $V$) at the point $s_{m,2n}=m/2-(2n+1)/2$ in terms of the Fourier coefficients of automorphic forms in $\sigma$ and orthogonal periods of forms in $\pi$. \\

\noindent {\bf Notation.} We use the following notation throughout the paper. Let $F$ be a number field with ring of integers $O_F$ and ring of adeles  $\A = \A_F$. We fix an additive unitary class character $\psi : \A / F \longrightarrow \C^{\times}$. Let $\A_{\infty} = \prod_{v | \infty} F_v$ be the product of the completions of $F$ at the archimedean places and let $\A_f = \prod'_{v < \infty} F_v$ be the ring of finite adeles (ie.~the restricted product of the completions of $F$ at nonarchimedean places with respect to the rings of integers $O_{F_v}$). For an algebraic group $G$ over $F$, let $[G] = G(F) \backslash G(\A)$ be the adelic quotient, $dg$ is any fixed Haar measure on $G(\A)$ and we write $\mathrm{vol} \, [G]$ for the measure of $[G]$ with respect to this measure. The results of this paper are independent of the choice of measures. The inner product of square-integrable functions on $[G]$ is written $\langle f_1,f_2 \rangle = \int_{[G]} f_1(g) \overline{f_2(g)} \, dg$. Finally, $\leftexp{t}{A}$ is the transpose of a matrix $A$ and $| \cdot |_{\A} = \prod_v | \cdot |_v$ is the adelic norm.

\section{The Theta Correspondence}

Let $F$ be a number field with $\A = \A_F$ its ring of adeles and fix once and for all an additive unitary class character $\psi : \A / F \longrightarrow \C^{\times}$. In this section, we introduce the theta correspondence and the Weil representation for symplectic-orthogonal dual pairs, the Rallis tower property and the cuspidality of theta lifts. This material is standard and so our treatment will be brief. We refer to \cite{MVW}, \cite{KudlaTheta} and \cite{Rao} for an introduction to the Weil representation and the theta correspondence (see also Weil's original paper \cite{Weil1}) and Rallis's original work \cite{R1} for his tower property.

\subsection{Symplectic and Orthogonal Groups} \label{SecGroups}

Let $V$ be a vector space over $F$ such that $\mathrm{dim}_F \hspace{0.1mm} V = m$ is $even$ and let $( \ , \hspace{0.8mm} )$ be a nondegenerate symmetric bilinear form on $V$. Let $H=\OV$ be the orthogonal group over $F$ attached to the pair $V , ( \ , \hspace{0.7mm} )$.

Let $W$ be the standard vector space over $F$ of dimension $2n$ equipped with the nondegenerate skew-symmetric bilinear form $\langle \ , \hspace{0.5mm} \rangle$ defined by the matrix
$$
J = \begin{pmatrix} 0 & 1_n \\ -1_n & 0 \end{pmatrix}
$$
where $1_n$ is the identity matrix of size $n$. Let $G=\Spn$ be the symplectic group over $F$ attached to the pair $W,\langle \ , \hspace{0.5mm} \rangle$. In particular, we view $W$ as the space of row vectors of size $2n$ and the automorphisms of $W$ as matrices acting by right multiplication therefore
\begin{equation}
G=\Spn = \left\{ g \in \mathrm{GL}_{2n} : g \, J \, \leftexp{t}{g} = J \right\} \, .
\end{equation}

The Siegel parabolic subgroup $P \subset \Spn$ has a decomposition $P=MN$ where the Levi component $M$ is
\begin{equation}
M = \left\{ m(a) = \begin{pmatrix} a & 0 \\ 0 & \leftexp{t}{a}^{-1} \end{pmatrix} : a \in \mathrm{GL}_n \right\}
\end{equation}
and the unipotent radical $N$ is
\begin{equation}
N =\left\{ n(b) = \begin{pmatrix} 1 & b \\ 0 & 1 \end{pmatrix} : b \in \mathrm{Sym}_n \right\}
\end{equation}
where $\mathrm{Sym}_n$ denotes symmetric matrices of size $n$.

We restrict to the case that $m$ is even for convenience and clarity. The experienced reader will see that the results contained in this paper extend with minor modifications to the case where $m$ is odd and $G$ is the metaplectic cover of $\Spn$. 

\subsection{The Weil Representation} \label{SecWeil}

Attached to the dual pair $(G,H)$ is a Weil representation $\omega = \omega_{\psi}$ (relative to the fixed character $\psi$) which is an action of $G(\A) \times H(\A)$ on the space $S(V(\A)^n)$ of Schwartz functions on
\begin{equation}
V(\A)^n = \{ \bv = (v_1, \dots , v_n) : v_1 , \dots , v_n \in V \otimes_F \A \} \ .
\end{equation}
This model of the Weil representation satisfies the familiar formulas for $\varphi \in S(V(\A)^n)$
\begin{equation}
\begin{array}{rlc}
\omega(h)\varphi(\bv) & \hspace{-2.5mm} = \varphi(h^{-1} \bv) & \hspace{5mm} h \in H(\A) \\
\omega(m(a))\varphi(\bv) & \hspace{-2.5mm} = \chi_V(\mathrm{det}\, a) \, |\mathrm{det}\, a |_{\A}^{m/2} \, \varphi(\bv a) & \hspace{5mm} m(a) \in M(\A) \\
\omega(n(b))\varphi(\bv) & \hspace{-2.5mm} = \psi(\mathrm{tr} \, b Q[\bv]) \, \varphi(\bv) & \hspace{5mm} n(b) \in N(\A)
\end{array}
\end{equation}
where
\begin{equation}
\chi_V(x) = \left( x,(-1)^{\frac{m(m-1)}{2}} \mathrm{det} \, V \right)_{\A}
\end{equation}
is the character of the quadratic space $V$ and $( \ , \hspace{0.7mm} )_{\A}$ is the global Hilbert symbol, and
\begin{equation}
Q[\bv]=\frac{1}{2} ((v_i,v_j ))_{i,j} \in \Symn(\A)
\end{equation}
where $((v_i,v_j ))_{i,j}$ is the symmetric matrix of inner products of the components of $\bv = (v_1,\dots,v_n)$.

\subsection{The Theta Correspondence}

Each $\varphi \in S(V(\A)^n)$ defines a theta function
\begin{equation}
\theta_{\varphi}(g,h) = \sum_{\bv \in V^n} \omega(g)\varphi(h^{-1}\bv) \ , \ g \in G(\A) \ , \ h \in H(\A)
\end{equation}
which is an automorphic form on the product $G(\A) \times H(\A)$. In particular, $\theta_{\varphi}$ is left invariant by $G(F) \times H(F)$ and is of moderate growth. Note that the formation of theta functions defines a ($G(\A) \times H(\A)$)-equivariant map from the Weil representation $\omega$ to the space $\mathscr{A}(G \times H)$ of automorphic forms on $G(\A) \times H(\A)$
\begin{equation}
\omega \longrightarrow \mathscr{A}(G \times H) : \varphi \mapsto \theta_{\varphi} \ .
\end{equation}

Theta functions serve as kernel functions for the theta correspondence which map automorphic forms on one group to automorphic forms on the other. In particular, let $f^H$ be a cuspidal automorphic form on $H$, let $\varphi \in S(V(\A)^n)$ and define the theta lift of $f^H$ to $G$ by
\begin{equation}
\theta_{\varphi}f^H(g) = \int_{[H]} \theta_{\varphi}(g,h) \, f^H(h) \, dh \ .
\end{equation}
This integral is convergent since $f^H$ is cuspidal and $\theta_{\varphi}$ is of moderate growth. For a cuspidal automorphic representation $\pi$ of $H$, the theta lift of $\pi$ to $G$ is the space $\theta_{\psi}(\pi)$ of all theta lifts $\theta_{\varphi}f^H$ for $\varphi \in S(V(\A)^n)$ and $f^H \in \pi$.

Similarly, let $f^G$ be a cuspidal automorphic form on $G$, let $\varphi \in S(V(\A)^n)$ and define the theta lift of $f^G$ to $H$ by
\begin{equation}
\theta_{\varphi^{\vee}}f^G(h) = \int_{[G]} \overline{\theta_{\varphi}(g,h)} \, f^G(g) \, dg \ .
\end{equation}
Again, this integral is convergent since $f^G$ is cuspidal and $\theta_{\varphi}$ is of moderate growth. For a cuspidal automorphic representation $\sigma$ of $G$, the theta lift of $\sigma$ to $H$ is the space $\theta_{\psi}(\sigma)$ of all theta lifts $\theta_{\varphi^{\vee}}f^G$ for $\varphi \in S(V(\A)^n)$ and $f^G \in \sigma$.

Our definitions of theta lifts imply the following adjoint property: if $f^G$ and $f^H$ are cuspidal automorphic forms on $G$ and $H$ respectively and $\varphi \in S(V(\A)^n)$ such that \emph{both} theta lifts $\theta_{\varphi^{\vee}}f^G$ and $\theta_{\varphi}f^H$ are cuspidal (ensuring that every integral below is absolutely convergent), then
\begin{equation} \label{adjoint}
\left\langle f^G , \theta_{\varphi}f^H \right\rangle_G = \left\langle \theta_{\varphi^{\vee}}f^G , f^H \right\rangle_H \ .
\end{equation}
Furthermore, we know by the work of Moeglin \cite{Moeglin} (generalizing \cite{R1}) that if $\sigma$ is a cuspidal representation of $G$ such that its theta lift consists of cusp forms, then $\pi = \theta_{\psi}(\sigma)$ is an irreducible cuspidal automorphic representation of $H$ and
\begin{equation}
\theta_{\psi}(\theta_{\psi}(\sigma)) = \sigma \ .
\end{equation}
Finally, the analogous statement for representations of $H$ also holds.

\subsection{Cuspidality and Rallis's Tower Property} \label{Tower}

Natural questions arise immediately: When is a theta lift nonzero? When is a theta lift cuspidal? These questions were first addressed by Rallis and the tower property.

Let $(V_0,Q_0)$ be an anisotropic quadratic space over $F$ and let $H_0$ be the corresponding orthogonal group. Let $\mathbb{H}$ be the hyperbolic space of dimension $2$ equipped with the quadratic form $Q(x,y)=xy$. For each $r \geq1$, define the quadratic space
\begin{equation}
V_r = V_0 \oplus \underbrace{\mathbb{H} \oplus \cdots \oplus \mathbb{H}}_{r \text{ copies}}
\end{equation}
and let $H_r$ be the corresponding orthogonal group. The increasing sequence of groups
\begin{equation} \label{WittTower}
H_0 \subset H_1 \subset H_2 \subset \cdots \subset H_r \subset \cdots
\end{equation}
is called the Witt tower attached to $H_0$. There is a Weil representation for each dual pair $(G,H_r)$ and we let $\theta_{\psi, r}(\sigma)$ denote the theta lift of a cuspidal representation $\sigma$ of $G(\A)$ to $H_r(\A)$. Rallis's tower property is the following.

\begin{theorem}[\cite{R1}]
Let $\sigma$ be a cuspidal automorphic representation of $G(\A)$ and let $i$ be the smallest integer such that $\theta_{\psi,i}(\sigma)$ is nonzero. Then:
\begin{enumerate}
\item $i \leq 2n$,
\item $\theta_{\psi, i}(\sigma)$ is cuspidal,
\item $\theta_{\psi,r}(\sigma)$ is nonzero for all $r \geq i$.
\end{enumerate}
Furthermore, the space of cusp forms on $G(\A)$ decomposes into the orthogonal sum
$$
L^2_{\mathrm{cusp}}(G(F) \backslash G(\A)) = I(Q_0) \oplus I(Q_1) \oplus \cdots \oplus I(Q_{2n})
$$
where $I(Q_r)$ is the space of $\sigma$'s such that $\theta_{\psi,r}(\sigma)\not=0$ and $\theta_{\psi,r'}(\sigma)=0$ for $r' < r$.
\end{theorem}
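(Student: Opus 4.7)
The plan is to base the whole theorem on a single computation: the constant term of the theta kernel along the unipotent radical of the maximal parabolic of $H_r$ stabilizing an isotropic line. Decompose $V_r = V_{r-1} \oplus \mathbb{H}$ with hyperbolic basis $e, e^*$, and let $P_r = M_r N_r \subset H_r$ be the maximal parabolic stabilizing $F e$, so that $M_r \cong \mathrm{GL}_1 \times H_{r-1}$. Splitting $\varphi \in S(V_r(\A)^n)$ into components $\varphi(\bv, \bx, \boldsymbol{y})$ with $\bv \in V_{r-1}(\A)^n$ and $\bx, \boldsymbol{y} \in \A^n$ along $e$ and $e^*$, I would compute the $N_r$-constant term of $\theta_\varphi(g, h)$ for $h \in H_{r-1}(\A)$ by unfolding the theta sum and applying Poisson summation in one of the isotropic variables. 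The output is a theta kernel for the smaller pair $(G, H_{r-1})$ applied to a Schwartz function on $V_{r-1}(\A)^n$ derived from $\varphi$ by partial integration. Pairing against a cuspidal $f^G$ then shows that $(\theta_\varphi f^G)_{N_r}\big|_{H_{r-1}(\A)}$ is itself a theta lift of $f^G$ into $H_{r-1}$.

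Parts (2) and (3) follow immediately from this formula. For (2), iterating the constant-term computation along the chain of parabolics shows that every nontrivial parabolic constant term of an element of $\theta_{\psi,i}(\sigma)$ is a theta lift of $f^G$ to some $H_{i-k}$ with $k \geq 1$; all such vanish by minimality of $i$, so $\theta_{\psi,i}(\sigma)$ is cuspidal. For (3), I induct on $r \geq i$: given a nonzero lift $\theta_{\varphi_r} f^G$ in $\theta_{\psi,r}(\sigma)$, I would construct $\varphi_{r+1} \in S(V_{r+1}(\A)^n)$ by a product ansatz $\varphi_{r+1}(\bv, \bx, \boldsymbol{y}) = \varphi_r(\bv)\phi(\bx)\phi'(\boldsymbol{y})$ with $\phi, \phi'$ chosen so that the $N_{r+1}$-constant term of $\theta_{\varphi_{r+1}} f^G$ equals a nonzero scalar multiple of $\theta_{\varphi_r} f^G$. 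Then $\theta_{\varphi_{r+1}} f^G$ must itself be nonzero.

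Part (1), the bound $i \leq 2n$, is the genuinely substantive step and is where I expect the main obstacle, since the constant-term machinery above only \emph{propagates} nonvanishing and must be fed by an independent nonvanishing assertion at some fixed tower level. The argument is that once $r$ is large enough for the pair $(G, H_r)$ to lie in the stable range (which, when $m$ is even, occurs at the latest at $r=2n$, corresponding to $\dim V_r \geq 4n$), every cuspidal $\sigma$ admits a nonzero theta lift: in that range the local theta lifts are nonzero at every place, and global nonvanishing follows from an explicit choice of $\varphi$ making the theta pairing against some cuspidal $f^H$ nonzero. The value $r=2n$ is the smallest index for which this argument works uniformly in $\sigma$.

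The orthogonal decomposition of $L^2_{\mathrm{cusp}}(G(F) \backslash G(\A))$ is then formal: by parts (1)--(3), every irreducible cuspidal $\sigma$ belongs to exactly one $I(Q_r)$ with $0 \leq r \leq 2n$, and the summands are pairwise orthogonal because distinct irreducible cuspidal representations of $G(\A)$ are mutually orthogonal in $L^2_{\mathrm{cusp}}(G(F) \backslash G(\A))$.
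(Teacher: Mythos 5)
First, a point of comparison: the paper does not prove this theorem at all --- it is quoted from Rallis \cite{R1} --- so your proposal should be measured against the standard argument in the literature, and for parts (2) and (3) your outline is exactly that argument: compute constant terms of the theta lift along parabolics of the orthogonal group and use surjectivity of the map $\varphi \mapsto \widehat{\varphi}$ on Schwartz functions to propagate nonvanishing up the tower. Two caveats there. (a) The constant term of the theta \emph{kernel} along $N_r$ is not just the smaller kernel; there is an additional family of orbit terms, and these die only after integration against $f^G$ because they unfold into constant terms of $f^G$ along proper parabolics of $\Spn$ --- this is precisely where cuspidality on the symplectic side enters and it should be said explicitly. (b) Cuspidality of $\theta_{\psi,i}(\sigma)$ requires vanishing of the constant terms along the unipotent radicals of \emph{all} maximal parabolics of $H_i$, i.e.\ the stabilizers of isotropic subspaces of every dimension $j \geq 1$. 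Iterating the rank-one computation only produces constant terms along stabilizers of isotropic flags, i.e.\ non-maximal parabolics with \emph{larger} unipotent radicals, and vanishing of those does not imply vanishing for the maximal ones. The fix is standard and uses the same method: run the computation directly for the $j$-dimensional stabilizer, which again yields (for cuspidal $f^G$) the lift to $H_{i-j}$, vanishing by minimality of $i$; but as written the iteration argument does not establish (2).

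The genuine gap is part (1), and your own diagnosis of where the difficulty lies is correct, but the argument offered does not fill it. Nonvanishing of all local theta lifts does not imply nonvanishing of the global lift --- the global question is exactly the subtle one, governed in general by central $L$-values via the Rallis inner product formula (this is the theme of the very paper you are inserted into; compare the $n=1$, $m=3$ situation of Waldspurger cited there). Moreover, ``choose $\varphi$ making the theta pairing against some cuspidal $f^H$ nonzero'' is a restatement of the conclusion, not a proof: by the adjunction $\langle \theta_{\varphi^{\vee}}f^G , f^H \rangle_H = \langle f^G , \theta_{\varphi}f^H \rangle_G$, producing such $f^H$ and $\varphi$ is equivalent to the assertion that $\theta_{\psi,2n}(\sigma) \neq 0$ (or that $\sigma$ is a lift from $H_{2n}$), which is what must be shown. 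What is true, and what Rallis actually proves, is stable-range nonvanishing: once the Witt index of $V_r$ is at least $\dim_F W = 2n$ (so $r \geq 2n$), one computes an explicit Fourier coefficient/period of $\theta_{\varphi^{\vee}}f^G$ along the unipotent radical of the stabilizer of a $2n$-dimensional isotropic subspace of $V_r$; this unfolds to an integral which, for a suitable explicit choice of $\varphi$, is visibly nonzero whenever $f^G \neq 0$ (alternatively, J.-S.\ Li's positivity argument via the doubling method gives nonvanishing in the stable range). Some such independent input is indispensable, since the constant-term machinery of (2)--(3) only propagates nonvanishing upward; without it the bound $i \leq 2n$, and hence also the decomposition $L^2_{\mathrm{cusp}} = I(Q_0) \oplus \cdots \oplus I(Q_{2n})$ (which, as you say, is formal once (1)--(3) are known), remains unproved.
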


\noindent The integer $i$ in the theorem is called the first occurrence index of $\sigma$ in the Witt tower of $H_0$. Note that Rallis's tower property shows that a cuspidal representation $\sigma$ of $G$ must lift to a cuspidal representation $\theta_{\psi,i}(\sigma)$ of $H_i$ for some $0 \leq i \leq 2n$ but the theorem does not give any information about the index itself.

The goal of this paper is to show that there are general relations between periods of automorphic forms of $\sigma$ and $\pi$ in the case that $\sigma$ is a cuspidal representation of $G$ such that $\pi = \theta_{\psi}(\sigma)$ is a cuspidal representation of $H$.

\section{Periods of Automorphic Forms}

The main result of this paper is the period identity in \autoref{MainTheorem} which shows a general relation between Fourier coefficients and orthogonal periods of cuspidal automorphic forms which correspond by the theta correspondence. As a consequence, we prove in the last section of this paper a special value formula for standard automorphic $L$-functions by combining our identity with the Rallis inner product formula (cf.~\autoref{SpecialValue}).

We begin by introducing Fourier coefficients of automorphic forms on symplectic groups and periods of cuspidal automorphic forms on orthogonal groups. The connection between Fourier coefficients and orthogonal periods in the theta correspondence is shown in \autoref{DualPeriods} from which we derive \autoref{MainTheorem}.

\subsection{Fourier Coefficients and Orthogonal Periods}

Let $G=\Sp_n$ and $H=\OV$ as in \autoref{SecGroups}. The symplectic group $G$ contains the Siegel parabolic subgroup $P=MN$ with unipotent radical
\begin{equation} \label{Unipotent}
N = \left\{ n(b)=\begin{pmatrix} 1 & b \\ 0 & 1 \end{pmatrix} : b \in \Symn \right\}
\end{equation}
where $\Symn$ denotes the space of symmetric matrices of size $n$. For $T \in \Symn(F)$, the $\psi_{T}$-Fourier coefficient of an automorphic form $f^G$ on $G$ is
\begin{equation}
W_{T}(f^G) = \int_{[N]} f^G(n) \, \overline{\psi_{T}(n)} \, dn
\end{equation} 
where $\psi_{T}(n)=\psi(\mathrm{tr} \, T b)$ for $n=n(b)$ as in (\ref{Unipotent}) above.

Let $\bx = (x_1,\dots,x_n) \in V^n$ be an $n$-tuple of rational vectors in $V$ and let $H_{\bx}$ be the stabilizer of $\bx$ in $H$ where $H$ acts on $V^n$ componentwise. The $H_{\bx}$-period of a cuspidal automorphic form $f^H$ on $H$ is
\begin{equation}
P_{H_{\bx}}(f^H) = \int_{[H_{\bx}]} f^H(h) \, dh \ .
\end{equation}

Fourier coefficients and orthogonal periods are related by the following (well-known) calculation. Given a cuspidal automorphic form $f^H$ on $H$ and a Schwartz function $\varphi \in S(V(\A)^n)$, the $\psi_{T}$-Fourier coefficient of the theta lift $\theta_{\varphi}f^H$ is
\begin{align*}
W_{T}(\theta_{\varphi} f^H) &= \int_{[N]} \left( \int_{[H]} \sum_{\bv \in V^n} \omega(n)\varphi(h^{-1}\bv) \, f^H(h) \, dh \right) \overline{\psi_{T}(n)} \, dn \\
&= \int_{[H]} \left( \int_{[\Symn]} \sum_{\bv \in V^n} \varphi(h^{-1}\bv) \, \psi( \mathrm{tr}(bQ[\bv]-T b)) \, db \right) f^H(h) \, dh \\
&= \int_{[H]} \sum_{\scriptsize{ \begin{array}{c} \bv \in V^n \\ Q[\bv]=T \end{array} } } \varphi(h^{-1}\bv) \, f^H(h) \, dh
\end{align*}
If we assume that $T$ is nondegenerate, then the group $H(F)$ acts transitively on the set in the sum above and so we chose some $\bx \in V^n$ such that $Q[\bx]=T$ and we let $H_{\bx}$ be its stabilizer. If there is no such $\bx$ (in other words, if $Q$ does not represent $T$), then $W_{T}(\theta_{\varphi} f^H) = 0$ for all $f^H$. We continue
\begin{align*}
W_{T}(\theta_{\varphi}f^H) &= \int_{[H]} \sum_{\gamma \in H_{\bx}(F) \backslash H(F)} \varphi(h^{-1} \gamma^{-1} \bx) \, f^H(h) \, dh \\
&= \int_{H_{\bx}(F) \backslash H(\A)} \varphi(h^{-1} \bx) \, f^H(h) \, dh \\
&= \int_{H_{\bx}(\A) \backslash H(\A)} \varphi(h^{-1} \bx) \int_{H_{\bx}(F) \backslash H_{\bx}(\A)} f^H(h' h) \, dh' \, dh \ .
\end{align*}
Thus the $\psi_{T}$-Fourier coefficient of the theta lift of $f^H$ is written in terms of a $H_{\bx}$-period where $Q[\bx]=T$. To push this integral one step further, we introduce the following lemma.

\begin{lemma} \label{MatchingLemma}
Let $\bx \in V^n$ such that $Q[\bx]=T$ is nondegenerate and let $H_{\bx}$ be the stabilizer of $\bx$ in $H$. For each $\varphi \in S(V(\A)^n)$, there is a smooth function $\xi$ on $H(\A)$ which is rapidly decreasing on $H(\A_{\infty})$ and compactly supported on $H(\A_f)$ satisfying
\begin{equation}
\varphi(h^{-1}\bx) = \int_{H_{\bx}(\A)} \xi(h_0 h) \, dh_0 \ .
\end{equation}
In this case, we say $(\varphi, \xi ; \bx)$ is a matching datum (or that $\xi$ matches $\varphi$ relative to $\bx$).
\end{lemma}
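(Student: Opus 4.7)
The plan is to reduce the construction of $\xi$ to finding a suitable cutoff function on $H(\A)$. Concretely, I would seek a smooth, nonnegative function $\chi$ on $H(\A)$, compactly supported on $H(\A_f)$ and Schwartz on $H(\A_\infty)$, satisfying
\[
\int_{H_{\bx}(\A)} \chi(h_0 h)\, dh_0 \;=\; 1 \qquad \text{for every } h \in H(\A).
\]
Granting such a $\chi$, one simply sets $\xi(h) = \varphi(h^{-1}\bx)\,\chi(h)$. Since $h \mapsto \varphi(h^{-1}\bx)$ is left $H_{\bx}(\A)$-invariant, it pulls out of the inner integral, producing the identity claimed in the lemma.

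To check that $\xi$ inherits the correct support and decay, I use in an essential way that $T = Q[\bx]$ is nondegenerate. Nondegeneracy implies that the orbit $H\cdot \bx \subset V^n$ is Zariski-closed and that the orbit map $H/H_{\bx} \hookrightarrow V^n$ is a closed immersion, hence proper on adelic points. Consequently, for each finite place $v$, the compact support of $\varphi_v$ on $V(F_v)^n$ pulls back to compact support of $h \mapsto \varphi_v(h^{-1}\bx)$ on $H_{\bx}(F_v)\backslash H(F_v)$, and at archimedean places the Schwartz property of $\varphi$ transfers to rapid decay on $H_{\bx}(F_v)\backslash H(F_v)$. Multiplying the $H_{\bx}(\A)$-invariant function $\varphi(h^{-1}\bx)$ by $\chi$ then selects a ``slice'' transverse to the $H_{\bx}(\A)$-orbits, and the desired support and decay of $\xi$ on $H(\A)$ follow from those of $\chi$ together with the preceding transfer of properties.

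To construct $\chi$, I would begin with a smooth, nonnegative $\chi_0$ on $H(\A)$ which is compactly supported on $H(\A_f)$, Schwartz on $H(\A_\infty)$, and everywhere locally positive; then form the average
\[
\chi_1(h) \;=\; \int_{H_{\bx}(\A)} \chi_0(h_0 h)\, dh_0,
\]
a smooth left $H_{\bx}(\A)$-invariant function on $H(\A)$, and set $\chi = \chi_0/\chi_1$. Locally this is routine: at almost every finite place one may take $\chi_{0,v}$ to be a normalized characteristic function of a hyperspecial maximal compact $K_v$, using that $K_v \cap H_{\bx}(F_v)$ is a compact open subgroup of $H_{\bx}(F_v)$; at the remaining finite places a partition-of-unity argument on the quotient $H_{\bx}(\A)\backslash H(\A)$ (which is a principal $H_{\bx}(\A)$-bundle) gives $\chi_0$ with the required properties.

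The main obstacle is the interplay between the averaging and archimedean decay: I must choose $\chi_0$ so that the denominator $\chi_1$ stays bounded away from zero on all of $H(\A)$ (so that $\chi = \chi_0/\chi_1$ remains smooth and well-defined), while simultaneously keeping $\chi$ — and hence $\xi$ — rapidly decreasing on $H(\A_\infty)$. This requires a careful global partition-of-unity on $H_{\bx}(\A)\backslash H(\A)$ combined with uniform estimates on Schwartz orbital integrals; once this technical point is settled the rest is formal.
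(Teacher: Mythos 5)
Your reduction to a cutoff function founders on the normalization you require of it: a smooth $\chi$ on $H(\A)$, compactly supported on $H(\A_f)$ and rapidly decreasing on $H(\A_\infty)$, with $\int_{H_{\bx}(\A)}\chi(h_0h)\,dh_0=1$ for \emph{every} $h\in H(\A)$, does not exist except when $H_{\bx}(\A)\backslash H(\A)$ is compact. The averaged function $\chi_1(h)=\int_{H_{\bx}(\A)}\chi(h_0h)\,dh_0$ lives on the quotient $H_{\bx}(\A)\backslash H(\A)$, which (via $h\mapsto h^{-1}\bx$, using Witt's theorem and the nondegeneracy of $T$) is identified with the closed but in general noncompact set $\{\bv : Q[\bv]=T\}$ of adelic points; it is noncompact at every place where $V_v$ is isotropic. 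Compact support of $\chi$ in the finite part forces $\chi_1$ to vanish outside a compact subset of the finite part of this quotient, and at an archimedean place the fiber integral of a rapidly decreasing function tends to $0$ at infinity of the orbit (every $h$ in the fiber over a far-away point $\bv$ satisfies $\|h^{-1}\|\gtrsim\|\bv\|$), so $\chi_1$ can be neither identically $1$ nor bounded below. Thus the ``main obstacle'' you defer at the end is not a technical estimate to be supplied later; it is an impossibility, and with it the construction $\xi(h)=\varphi(h^{-1}\bx)\chi(h)$ collapses. Weakening the demand to ``$\chi_1=1$ on the support of $h\mapsto\varphi(h^{-1}\bx)$'' does rescue the finite places, where that support is compact modulo $H_{\bx}(\A_f)$ precisely because the orbit is closed and $\varphi_f$ has compact support, but not the archimedean ones: there $\varphi_\infty$ is merely Schwartz, its support in the quotient is typically the whole noncompact orbit, and the same obstruction applies.

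The workable fix is not to flatten the fiber average to $1$ but to prescribe it to be $\varphi_v(h^{-1}\bx)$ directly, which is what the paper does. At a finite place one covers the (compact mod $H_{\bx}(F_v)$) support by finitely many double cosets $H_{\bx}(F_v)\gamma U_v$, with $U_v$ a compact open subgroup fixing $\varphi_v$, and takes $\xi_v$ to be the sum of the $C_{\gamma U_v}$ weighted by $\varphi_v(\gamma^{-1}\bx)/\mathrm{vol}(H_{\bx}(F_v)\cap\gamma U_v\gamma^{-1})$; at an isotropic archimedean place one uses the principal $H_{\bx}(F_v)$-bundle $H(F_v)\to\{Q[\bv]=T\}$ and a partition of unity to glue functions $f_i(\gamma)\varphi_v(\bv)$, scaling the $f_i$ so that the fiber integral equals $\varphi_v(h^{-1}\bx)$ on the nose. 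Note also that even your ``routine'' unramified step hides a real argument: with $\varphi_v$ the characteristic function of $L_v^n$ and $\chi_{0,v}$ a normalized characteristic function of $K_v$, the matching identity at $v$ requires that every $h$ with $h^{-1}\bx\in L_v^n$ lie in $H_{\bx}(F_v)K_v$, i.e.\ that only one double coset contributes; this is exactly the lattice argument via Witt cancellation over $O_{F_v}$ in the paper, and it is also what makes the restricted product $\xi=\otimes_v\xi_v$ a well-defined function on $H(\A)$, a point your global construction does not address.
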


We will defer the proof of the lemma until the next section. Continuing with the integral above with the function $\xi$ matching $\varphi$ relative to $\bx$ as in the previous lemma, we have
\begin{align*}
W_{T}(\theta_{\varphi}f^H) &= \int_{H_{\bx}(\A) \backslash H(\A)} \varphi(h^{-1} \bx) \int_{H_{\bx}(F) \backslash H_{\bx}(\A)} f^H(h' h) \, dh' \, dh \\
&= \int_{H_{\bx}(\A) \backslash H(\A)} \int_{H_{\bx}(\A)} \xi(h_0 h) \, dh_0 \int_{H_{\bx}(F) \backslash H_{\bx}(\A)} f^H(h' h) \, dh' \, dh \\
&= \int_{H(\A)} \xi(h) \int_{H_{\bx}(F) \backslash H_{\bx}(\A)} f^H(h' h) \, dh' \, dh \\
&= \int_{H_{\bx}(F) \backslash H_{\bx}(\A)} \int_{H(\A)} \xi(h) \, f^H(h' h) \, dh \, dh' \ .
\end{align*}

Therefore we have proved the following formal identity which is the main input into our period identities \autoref{MainTheorem} and \autoref{InnerProdThm}. In the next section, we prove \autoref{MatchingLemma} and give an explicit formula for the local functions $\xi_v$ at finite places.

\begin{proposition} \label{DualPeriods}
Let $\bx \in V^n$ such that $Q[\bx]=T$ is nondegenerate and let $H_{\bx}$ be the stabilizer of $\bx$ in $H$. Let $\varphi \in S(V(\A)^n)$ and let $\xi$ be a smooth function which matches $\varphi$ relative to $\bx$ as in \autoref{MatchingLemma}. For any cusp form $f^H \in L^2_{\mathrm{cusp}}(H(F) \backslash H(\A))$, we have
\begin{equation}
W_{T}(\theta_{\varphi}f^H) = P_{H_{\bx}}(R_{\xi}f^H)
\end{equation}
where $R_{\xi}f^H(h) = \int_{H(\A)} \xi(h') \, f^H(hh') \, dh'$.
\end{proposition}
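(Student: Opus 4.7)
The plan is to follow the unfolding computation essentially laid out in the discussion preceding the proposition, and to recognize at the final step, via \autoref{MatchingLemma}, that the resulting double integral is the $H_{\bx}$-period of a right translate of $f^H$. Concretely, I would start from
$$W_{T}(\theta_{\varphi}f^H) = \int_{[N]} \int_{[H]} \theta_{\varphi}(n,h) \, f^H(h) \, \overline{\psi_{T}(n)} \, dh \, dn,$$
swap the two outer integrals (which is immediate since $[N]$ is compact), and expand the kernel using $\omega(n(b))\varphi(\bv) = \psi(\mathrm{tr}(bQ[\bv]))\varphi(\bv)$. The integration in $b \in [\Symn]$ then acts as a Fourier projection onto those $\bv \in V^n$ with $Q[\bv]=T$, so what remains is $\int_{[H]} \sum_{Q[\bv]=T} \varphi(h^{-1}\bv) f^H(h) \, dh$.

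Next, since $T$ is nondegenerate, $H(F)$ acts transitively on $\{\bv \in V^n : Q[\bv]=T\}$ (and if the set is empty then both sides vanish trivially). Fixing any $\bx$ with $Q[\bx]=T$ and its stabilizer $H_{\bx}$, I rewrite the sum as one over $H_{\bx}(F)\backslash H(F)$ and fold it into the $H(F) \backslash H(\A)$ quotient to obtain an integral over $H_{\bx}(F) \backslash H(\A)$. Decomposing this as $H_{\bx}(\A) \backslash H(\A)$ fibered over $H_{\bx}(F) \backslash H_{\bx}(\A)$ yields the formula
$$W_{T}(\theta_{\varphi} f^H) = \int_{H_{\bx}(\A) \backslash H(\A)} \varphi(h^{-1}\bx) \left( \int_{H_{\bx}(F) \backslash H_{\bx}(\A)} f^H(h'h) \, dh' \right) dh.$$

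Now I apply \autoref{MatchingLemma} to substitute $\varphi(h^{-1}\bx) = \int_{H_{\bx}(\A)} \xi(h_0 h) \, dh_0$. Combining the variables $h_0 \in H_{\bx}(\A)$ and $h \in H_{\bx}(\A) \backslash H(\A)$ into a single variable in $H(\A)$ collapses the double integral into $\int_{H(\A)} \xi(h)$ times the inner period. A final Fubini swap moves the $H(\A)$ integral inside the period over $H_{\bx}(F) \backslash H_{\bx}(\A)$, producing
$$\int_{H_{\bx}(F) \backslash H_{\bx}(\A)} \left( \int_{H(\A)} \xi(h) \, f^H(h'h) \, dh \right) dh' = P_{H_{\bx}}(R_{\xi} f^H),$$
which is precisely the claimed identity.

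The main obstacle is justifying the two Fubini-style interchanges, since the inner period integral over $[H_{\bx}]$ is not obviously absolutely convergent after we insert $|\xi|$ and $|f^H|$. This is handled by combining three ingredients: cuspidality (hence rapid decrease) of $f^H$ on $[H]$, which restricts to rapid decrease on $[H_{\bx}]$ since $H_{\bx}$ is reductive; the rapid decrease of $\xi$ on $H(\A_{\infty})$ together with its compact support on $H(\A_f)$ as guaranteed by \autoref{MatchingLemma}; and the compactness of $[N]$ for the first swap. With these growth and support properties in hand, all the iterated integrals are absolutely convergent and every manipulation above is rigorously justified by dominated convergence.
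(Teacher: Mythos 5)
Your proposal is correct and follows essentially the same route as the paper: the paper proves \autoref{DualPeriods} by exactly this unfolding of the $\psi_T$-Fourier coefficient over $[N]$, the transitivity of $H(F)$ on $\{\bv : Q[\bv]=T\}$, the fibering over $H_{\bx}(\A)\backslash H(\A)$, and the substitution from \autoref{MatchingLemma} followed by a final interchange of integrals. Your added remarks on absolute convergence (cuspidality of $f^H$, the growth and support properties of $\xi$, compactness of $[N]$) only make explicit what the paper treats as a formal identity, and they are sound.
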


\subsection{Matching Functions}

In this section, we will prove \autoref{MatchingLemma} along with an explicit formula  for the local functions $\xi_v$ at finite places.

\begin{lemma*}
Let $\bx \in V^n$ such that $Q[\bx]=T$ is nondegenerate and let $H_{\bx}$ be the stabilizer of $\bx$ in $H$. For each $\varphi \in S(V(\A)^n)$, there is a smooth function $\xi$ on $H(\A)$ which is rapidly decreasing on $H(\A_{\infty})$ and compactly supported on $H(\A_f)$ satisfying
\begin{equation} \label{MatchingEq1}
\varphi(h^{-1}\bx) = \int_{H_{\bx}(\A)} \xi(h_0 h) \, dh_0 \ .
\end{equation}
In this case, we say $(\varphi, \xi ; \bx)$ is a matching datum (or that $\xi$ matches $\varphi$ relative to $\bx$). Furthermore, if $\varphi = \otimes_v \varphi_v$ is factorizable, then $\xi = \otimes_v \xi_v$ is factorizable where, for any finite place $v$, the local function $\xi_v$ is given by the finite sum
\begin{equation} \label{MatchingEq2}
\xi_v(h) = \sum_{ {\scriptsize \begin{array}{c} \gamma \in \mathscr{C} \\ \gamma^{-1} \bx \in \mathrm{supp} \, \varphi_v \end{array} } } \frac{\varphi_v(\gamma^{-1} \bx)}{\mathrm{vol}(H_{\bx}(F_v) \cap \gamma U_v \gamma^{-1})} \ C_{\gamma U_v}(h)
\end{equation}
where
\begin{enumerate}
\item $U_v \subset H(F_v)$ is any compact open subgroup which acts trivially on $\varphi_v$
\item $\mathscr{C}$ is a set of representatives of the double coset space $H_{\bx}(F_v) \backslash H(F_v) / U_v$
\item $C_{\gamma U_v}$ is the characteristic function of $\gamma U_v$.
\end{enumerate}
\end{lemma*}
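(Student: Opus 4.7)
The plan is to construct $\xi = \otimes_v \xi_v$ place by place, exploiting the fact that, since $T = Q[\bx]$ is nondegenerate, the orbit map
\[
H \longrightarrow V^n, \qquad h \longmapsto h^{-1}\bx,
\]
is a closed immersion that identifies $H_{\bx}\backslash H$ with the (closed) $H$-orbit of $\bx$. Thus the function $h \mapsto \varphi_v(h^{-1}\bx)$ on $H(F_v)$ is the pullback of a ``Schwartz'' function on that orbit and is both left $H_{\bx}(F_v)$-invariant and, at archimedean places, rapidly decreasing modulo $H_{\bx}(F_v)$. The task is then to find a smooth lift $\xi_v$ whose average over $H_{\bx}(F_v)$ recovers this function.

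At a finite place $v$, I would fix a compact open $U_v \subset H(F_v)$ small enough that $\varphi_v$ is invariant under the translation action of $U_v$ on $V(F_v)^n$, so that $h \mapsto \varphi_v(h^{-1}\bx)$ descends to a function on $H_{\bx}(F_v)\backslash H(F_v)/U_v$. The set of double cosets $\gamma U_v$ on which this function is nonzero is finite: indeed, $\{h^{-1}\bx : h \in H(F_v), \varphi_v(h^{-1}\bx)\neq 0\}$ is a relatively compact subset of the closed orbit $H(F_v)\cdot\bx$, so its image in $H_{\bx}(F_v)\backslash H(F_v)$ is compact (the orbit map being proper when restricted to fibers over the closed orbit), and this image meets only finitely many $U_v$-cosets. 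Defining $\xi_v$ by the formula (\ref{MatchingEq2}), a direct calculation verifies (\ref{MatchingEq1}): for $h \in \gamma_0 U_v$ only the term indexed by the representative of $H_{\bx}(F_v)\gamma_0 U_v$ contributes, the characteristic function integrates to $\mathrm{vol}(H_{\bx}(F_v)\cap \gamma U_v\gamma^{-1})$ (using a left translation to rewrite the intersection), and the normalizing denominator cancels to leave $\varphi_v(\gamma_0^{-1}\bx)=\varphi_v(h^{-1}\bx)$. Compact support of $\xi_v$ and its $U_v$-invariance (hence smoothness) are immediate from the formula.

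At an archimedean place $v$, I would produce $\xi_v$ by the ``Schwartz section'' trick. Using that $H_{\bx}\backslash H$ is a closed submanifold of $H$ with a smooth slice, there exists a smooth nonnegative function $\rho$ on $H(F_v)$ which is rapidly decreasing and satisfies
\[
\int_{H_{\bx}(F_v)} \rho(h_0 h)\, dh_0 = 1 \qquad \text{for all } h \in H(F_v);
\]
concretely, one picks any smooth rapidly decreasing nonnegative $\rho_0$ on $H(F_v)$ whose $H_{\bx}(F_v)$-average is strictly positive and divides by that average, the quotient being smooth and rapidly decreasing because the denominator is itself smooth, positive and bounded below on compacta on $H_{\bx}(F_v)\backslash H(F_v)$. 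Setting $\xi_v(h) = \rho(h)\,\varphi_v(h^{-1}\bx)$ and using that $\varphi_v((h_0 h)^{-1}\bx) = \varphi_v(h^{-1}\bx)$ for $h_0 \in H_{\bx}(F_v)$, the averaging identity gives exactly (\ref{MatchingEq1}); rapid decrease of $\xi_v$ follows since both factors are rapidly decreasing on $H(F_v)$.

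Finally, taking tensor products, $\xi = \otimes_v \xi_v$ satisfies the global matching identity by Fubini, and the product over finite $v$ is well defined because, for almost all $v$, one has $\bx \in K_v$-integral data with $\varphi_v$ the characteristic function of a fixed lattice and $\xi_v$ the characteristic function of a fixed hyperspecial $U_v$, in which case (\ref{MatchingEq2}) reduces to a single term. The main obstacle I anticipate is a clean justification of the finiteness/compactness needed to make the sum in (\ref{MatchingEq2}) finite; this is where nondegeneracy of $T$ enters decisively, forcing the orbit of $\bx$ to be closed so that Schwartz/compactly-supported conditions on $V^n$ pull back to the corresponding conditions on $H_{\bx}(F_v)\backslash H(F_v)$.
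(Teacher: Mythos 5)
Your treatment of the finite places is essentially the paper's own argument (choice of $U_v$ fixing $\varphi_v$, finiteness of the relevant double cosets via the closed orbit $H_{\bx}(F_v)\backslash H(F_v)\cong\Omega_T$, and direct verification of the formula), but the final step you wave at is precisely the nontrivial point: you assert that for almost all $v$ the sum (\ref{MatchingEq2}) ``reduces to a single term'' with $\xi_v$ the characteristic function of a hyperspecial maximal compact. This is not a formality; it is what makes the restricted tensor product $\xi=\otimes_v\xi_v$ a well-defined function on $H(\A)$, compactly supported on $H(\A_f)$, as the lemma claims. The paper proves it: for almost all $v$ one has $\varphi_v=\varphi_{L_v}^{\otimes n}$, $\bx\in L_v^n$, $\det Q[\bx]\in O_{F_v}^{\times}$ and $Q$ integral on $L_v$; then $\Lambda_{\bx}=\mathrm{span}_{O_{F_v}}\{x_1,\dots,x_n\}$ is a regular sublattice, so $L_v=\Lambda_{\bx}\oplus\Lambda'$, and if $\gamma^{-1}\bx\in L_v^n$ then also $\gamma L_v=\Lambda_{\bx}\oplus\Lambda''$, whence Witt cancellation over the local ring produces $\delta\in H_{\bx}(F_v)$ with $\delta\gamma L_v=L_v$, i.e.\ $\gamma\in H_{\bx}(F_v)K_v$; combined with $\mathrm{vol}(H_{\bx}(F_v)\cap K_v)=1$ for almost all $v$ this gives $\xi_v=C_{K_v}$. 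Your proposal supplies no argument for this, and it is the main arithmetic content of the lemma beyond the formal computation.

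At an archimedean place where $V_v$ is isotropic, your normalization trick is incorrectly justified. Set $A(h)=\int_{H_{\bx}(F_v)}\rho_0(h_0h)\,dh_0$; this is a left $H_{\bx}(F_v)$-invariant function, i.e.\ a function on the noncompact quotient $H_{\bx}(F_v)\backslash H(F_v)$, and since $\rho_0$ is rapidly decreasing, $A$ tends to zero at infinity on that quotient. Being ``bounded below on compacta'' is therefore irrelevant: the quotient $\rho=\rho_0/A$ has fiber integrals identically equal to $1$, so it cannot be rapidly decreasing on $H(F_v)$, and your closing claim that $\xi_v=\rho\cdot\varphi_v(h^{-1}\bx)$ decays rapidly ``since both factors are rapidly decreasing'' fails. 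What is actually needed is a uniform statement: decay along the fibers coming from the fiber-direction factor and decay along the base coming from $\varphi_v$, controlled simultaneously at infinity of the base. The paper arranges this by viewing $H(F_v)\to\Omega_T$ as a principal $H_{\bx}(F_v)$-bundle, choosing on each trivializing chart a rapidly decreasing function on the fiber group multiplied by $\varphi_v$ on the base, gluing with a partition of unity, and then scaling so the fiber integrals equal $\varphi_v(h^{-1}\bx)$. Your slice construction can be repaired along these lines, but as written the rapid-decrease step is a genuine gap rather than a routine verification.
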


Let us make few remarks before we continue with the proof of this lemma. Notice that if $\xi(h)$ is any function which satisfies (\ref{MatchingEq1}), then $\xi(h'h)$ also satisfies (\ref{MatchingEq1}) for any $h' \in H_{\bx}(\A)$. Also, the local function $\xi_v$ described in (\ref{MatchingEq2}) \emph{depends on the choice $\mathscr{C}$ of representatives} for the double coset space in the sum. Finally, although the set $\mathscr{C}$ may be infinite, the sum is always finite.

\begin{proof}
We may assume that $\varphi = \otimes_v \, \varphi_v$ is factorizable therefore we will prove the analogous equality
\begin{equation}
\varphi_v(h^{-1}\bx) = \int_{H_{\bx}(F_v)} \xi_v(h_0h) \, dh_0 \ , \ h \in H(F_v)
\end{equation}
for each place $v$. The group $H(\A)$ can be written as the restricted product $\prod_v' H(F_v)$ with respect to the compact open subgroups $K_v = \mathrm{Aut}(L_v) \subset H(F_v)$ where $L_v = L \otimes_{O_F} O_{F_v}$ for a fixed global $O_F$-lattice $L \subset V$. We will show that $\xi_v$ is the characteristic function of $K_v$ for almost every place $v$.

Suppose $v$ is a finite place. If $V_v$ is anisotropic, the group $H(F_v)$ is compact therefore
\begin{equation}
\xi_v(h)= \frac{1}{\mathrm{vol} \, H_{\bx}(F_v) } \, \varphi_v(h^{-1} \bx) \ , \ h \in H(F_v)
\end{equation}
is a smooth compactly supported function which matches $\varphi_v$ relative to $\bx$.

Suppose $V_v$ is isotropic. Let $U_v \subset H(F_v)$ be a compact open subgroup such that $\varphi_v(k \bv) = \varphi_v(\bv)$ for all $k \in U_v$ and $\bv \in V^n_v$. Use the notation $C_S$ to denote the characteristic function of a set $S$ and write
\begin{align*}
\varphi_v(h^{-1} \bx) &= \sum_{ {\scriptsize \begin{array}{c} \gamma \in H(F_v) / U_v \\ \gamma^{-1} \bx \in \text{supp} \, \varphi_v \end{array} } } \varphi_v(\gamma^{-1} \bx) \, C_{\gamma U_v }(h) \\
&= \sum_{ {\scriptsize \begin{array}{c} \gamma \in H_{\bx}(F_v) \backslash H(F_v) / U_v \\ \gamma^{-1} \bx \in \text{supp} \, \varphi_v \end{array} } } \varphi_v(\gamma^{-1} \bx) \sum_{h_0 \in (H_{\bx}(F_v) \cap \gamma U_v \gamma^{-1}) \backslash H_{\bx}(F_v)} C_{\gamma U_v}(h_0 h) \  .
\end{align*}
We claim that the outer sum is finite. Since $\mathrm{det} \, T \not= 0$, the group $H(F_v)$ acts transitively on the set $\Omega_{T}=\{ \bv \in V_v^n : Q[\bv] = T \}$ and therefore the map $\gamma \mapsto \gamma^{-1} \bx$ is a homeomorphism between $H_{\bx}(F_v) \backslash H(F_v)$ and $\Omega_{T}$. Since $\varphi_v$ has compact support, its restriction to the closed subset $\Omega_{T}$ has compact support and therefore the support of $\varphi_v(h^{-1} \bx)$ in $H_{\bx}(F_v) \backslash H(F_v)$ is also compact. Finally, since $H_{\bx}(F_v) \backslash H(F_v) / U_v$ is discrete, the set $\{ \gamma \in H_{\bx}(F_v) \backslash H(F_v) / U_v : \gamma^{-1} \bx \in \text{supp} \, \varphi_v \}$ is finite.

We make the observation
\begin{equation}
\int_{H_{\bx}(F_v)} C_{\gamma U_v} (h_0 h) \, dh_0 = \mathrm{vol}(H_{\bx}(F_v) \cap \gamma U_v \gamma^{-1}) \sum_{h_0 \in \left( H_{\bx}(F_v) \cap \gamma U_v \gamma^{-1} \right) \backslash H_{\bx}(F_v)} C_{\gamma U_v}(h_0 h)
\end{equation}
for all $\gamma , h \in H(F_v)$. Now we must \emph{make a choice} $\mathscr{C}$ for a set of representatives $\gamma$ of the double coset space $H_{\bx}(F_v) \backslash H(F_v) / U_v$ and then define a function given by the finite sum
\begin{equation} \label{ProofLocalEq}
\xi_v(h) = \sum_{ {\scriptsize \begin{array}{c} \gamma \in \mathscr{C} \\ \gamma^{-1} \bx \in \text{supp} \, \varphi_v \end{array} } } \frac{\varphi_v(\gamma^{-1} \bx)}{\mathrm{vol}(H_{\bx}(F_v) \cap \gamma U_v \gamma^{-1})} \ C_{\gamma U_v}(h)
\end{equation}
Then $\xi_v$ is a locally constant compactly supported function which matches $\varphi_v$ relative to $\bx$.

We claim that, for almost all $v$, the sum $(\ref{ProofLocalEq})$ has a single term and $\xi_v(h) = C_{K_v}(h)$ where $K_v$ is the maximal compact subgroup $K_v = \mathrm{Aut}(L_v) \subset H(F_v)$ for a fixed global lattice $L \subset V$. For almost all $v$, we are in the following situation:
\begin{enumerate}
\item $\varphi_v = \varphi_{L_v} \otimes \cdots \otimes \varphi_{L_v}$ is the $n$-fold tensor product of the characteristic function of $L_v$
\item $\bx = (x_1 , \dots , x_n) \in L_v^n$
\item $\mathrm{det} \, Q[\bx] \in O_{F_v}^{\times}$
\item $Q$ is $O_{F_v}$-valued on $L_v$
\end{enumerate}
The symmetric matrix $Q[\bx]$ represents the quadratic form $Q$ restricted to $\Lambda_{\bx} = \mathrm{span}_{O_{F_v}}\{ x_1 , \dots , x_n\}$ relative to the basis $x_1 , \dots , x_n$ consisting of the components of $\bx$. Since $\mathrm{det} \, Q[\bx] \in O^{\times}_{F_v}$, the lattice $\Lambda_{\bx}$ is regular therefore $L_v = \Lambda_{\bx} \oplus \Lambda'$ is an orthogonal direct sum of $O_{F_v}$-lattices where
\begin{equation}
\Lambda' = \{ v \in L_v : (v,w)=0\ \text{ for all } w \in \Lambda_{\bx} \} \ . 
\end{equation}
Suppose $\gamma \in H(F_v)$ such that $x_i \in \gamma L_v$ for each $i=1,\dots,n$. Then $\Lambda_{\bx}$ is a regular subspace of $\gamma L_v$ therefore we have the orthogonal direct sum $\gamma L_v = \Lambda_{\bx} \oplus \Lambda''$ of $O_{F_v}$-lattices where
\begin{equation}
\Lambda'' = \{ \gamma v \in \gamma L_v : (\gamma v,w) = 0 \text{ for all } w \in \Lambda_{\bx} \} \ .
\end{equation}
Since $\Lambda_{\bx} \oplus \Lambda'$ and $\Lambda_{\bx} \oplus \Lambda''$ are isometric, we must have that $\Lambda'$ and $\Lambda''$ are isometric by the Witt cancellation property for local rings. In other words, there is some $\delta \in H_{\bx}(F_v)$ (note that $H_{\bx}(F_v)$ is the orthogonal group of $\mathrm{span}_{F_v}\{ x_1 , \dots , x_n\}^{\perp}$) such that $\delta \Lambda'' = \Lambda'$. Finally, $\delta \gamma L_v = L_v$ and so $\delta \gamma \in K_v$. Therefore, in this most unramified case, 
\begin{equation}
\# \{ \gamma \in H_{\bx}(F_v) \backslash H(F_v) / U_v : \gamma^{-1} \bx \in \text{supp} \, \varphi_v \} = 1
\end{equation}
therefore the sum (\ref{ProofLocalEq}) has a single term and $\xi_v(h)=C_{K_v}(h)$. Here we have used the fact that $\mathrm{vol}(H_{\bx}(F_v) \cap K_v)=1$ for almost all $v$.

Suppose $v$ is an infinite place. Then $V_v$ is anisotropic and the group $H(F_v)$ is compact therefore
\begin{equation}
\xi_v(h)= \frac{1}{\mathrm{vol} \, H_{\bx}(F_v) } \, \varphi_v(h^{-1} \bx)
\end{equation}
is a smooth compactly supported function which matches $\varphi_v$ relative to $\bx$.

Finally, suppose $v$ is an infinite place such that $V_v$ is isotropic. This is the only case where we cannot give an exact definition of the local function $\xi_v$. The group $H(F_v)$ is a Lie group, $H_{\bx}(F_v)$ is a Lie subgroup and the natural map $p : H(F_v) \longrightarrow \Omega_{T} : h \mapsto h^{-1} \bx$ is a princial $H_{\bx}(F_v)$-bundle where $\Omega_{T} = \{ \bv \in V_v^n : Q[\bv] = T \}$. Let $\{ U_i , \Phi_i \}_{i \in I}$ be an open cover of $\Omega_T$ with local trivializations $\Phi_i : p^{-1}(U_i) \cong H_{\bx}(F_v) \times U_i$. On each of the open sets $p^{-1}(U_i)$, we can arbitrarily chose a smooth rapidly decreasing function $f_i$ on $H_{\bx}(F_v)$ and define a smooth rapidly decreasing function on $p^{-1}(U_i)$ by $f_i(\gamma) \varphi_v(\bv)$ using the isomorphism $\Phi_i(h)=(\gamma,\bv) \in H_{\bx}(F_v) \times U_i$. We can use a partition of unity to glue the functions $f_i(\gamma) \varphi_v(\bv)$ on each $p^{-1}(U_i)$ together to get a smooth rapidly decreasing function $\xi_v$ on the whole group $H(F_v)$. Since the functions $f_i$ were arbitrary we can scale them so that $\int_{H_{\bx}(F_v)} \xi_v(h_0h) \, dh_0 = \varphi_v(h^{-1}\bx)$.

\end{proof}

\subsection{Example: Local Matching Functions for $\mathrm{PGL}_2(\Qp)$}

In this section, we will compute the local functions $\xi_v$ by the recipe (\ref{MatchingEq2}) in the following particular case. Let $p$ be a prime not equal to 2 and consider the quadratic space $(V,Q)$ over $\Qp$ consisting of traceless 2 by 2 matrices
\begin{equation}
V = \left\{ \begin{pmatrix} a & b \\ c & -a \end{pmatrix} : a,b,c \in \Qp \right\}
\end{equation}
equipped with the quadratic form
\begin{equation}
Q(x) = -\mathrm{det}(x) \ \ (\text{equivalently, } x^2 = Q(x) \cdot \mathrm{id}) .
\end{equation}
Note that this corresponds to the inner product
\begin{equation}
(x,y) = \mathrm{tr}(xy) \ , \ \ x,y \in V \, .
\end{equation}
The special orthogonal group is given by
\begin{equation}
\mathrm{SO}(V) = \mathrm{PGL}_2(\Qp)
\end{equation}
via the natural action $x \mapsto gxg^{-1}$ for $g \in \mathrm{GL}_2(\Qp)$. (Again, we note that even though this paper restricts to the case that $\mathrm{dim}_F \, V$ is even, all our results apply with trivial modifications to the case when $\mathrm{dim}_F \, V$ is odd and $G$ is the metaplectic cover of $\Spn$.)

Let $H=\mathrm{PGL}_2(\Qp)$ and let $K \subset H$ be the maximal compact subgroup which stabilizes the lattice of integral elements
\begin{equation}
L = \left\{ \begin{pmatrix} a & b \\ c & -a \end{pmatrix} : a,b,c \in \Zp \right\} \ .
\end{equation}
In other words, $K$ is equal to the image of $\mathrm{GL}_2(\Zp)$ projected to $H$.

Let $\varphi \in S(V)$ be the characteristic function of the lattice $L \subset V$, let $x \in V$ such that $Q(x) \not= 0$ and let $H_x$ be the stabilizer of $x$ in $H$. The goal of this section is to find a smooth compactly supported function $\xi$ on $H$ such that $\xi$ matches $\varphi$ relative to $x$ as in (\ref{MatchingEq1}). In other words, we want to find $\xi$ such that 
\begin{equation}
\varphi(h^{-1} x) = \int_{H_x} \xi(h' h ) \, dh'
\end{equation}
and we will use the recipe (\ref{MatchingEq2}). Note that since $\varphi$ is an even function, we can extend $\xi$ to a function on the whole orthogonal group $\OV$ by the projection map $\OV \rightarrow \mathrm{SO}(V)$. We begin with a few reduction steps.

Firstly, if $Q(x) \not\in \Zp$ then $\varphi(h^{-1} x) = 0$ for all $h \in H$ since $Q(L) \subset \Zp$. Therefore we may assume $Q(x) \in \Zp$.

Secondly, if $Q(x) = p^{\alpha} \varepsilon$ for $\alpha \in \Z_{\geq 0}$ and $\varepsilon \in \Zp^{\times}$, then $x$ generates a quadratic extension of $\Qp$. Therefore we need only consider three cases (recall, $p > 2$):
\begin{enumerate}
\item[(inert) :] if $\alpha$ is even and $\varepsilon$ is a nonsquare, then we may assume $x = p^{\alpha/2}  \begin{pmatrix} 0 & \varepsilon \\ 1 & 0 \end{pmatrix}$
\item[(ramified) :] if $\alpha$ is odd, then we may assume $x = p^{(\alpha-1)/2} \begin{pmatrix} 0 & p \varepsilon \\ 1 & 0 \end{pmatrix}$
\item[(split) :] if $\alpha$ is even and $\varepsilon$ is a square, then we may assume $x = p^{\alpha/2} \begin{pmatrix} 1 & 0 \\ 0 & -1 \end{pmatrix}$
\end{enumerate}
In particular, if $y \in V$ is any element such that $Q(y)$ is integral and nonzero then $y$ is in the $H$-orbit of one of the elements in the three cases above.

Finally, note that the compact open subgroup $K$ fixes $\varphi$ and therefore it plays the role of $U_v$ appearing in (\ref{MatchingEq2}). Therefore, to compute $\xi$ we need to:
\begin{enumerate}
\item determine a set $\mathscr{C}$ of representatives for $H_x \backslash H / K$ and determine the subset $\{ \gamma \in \mathscr{C} : x \in \gamma L \}$
\item compute $\mathrm{vol}(H_x \cap \gamma K \gamma^{-1})$ for $\gamma \in \{ \gamma \in \mathscr{C} : x \in \gamma L \}$
\end{enumerate}

\begin{lemma} .

\begin{enumerate}

\item The subgroup $H_x$ is equal to the image in $H$ of:
\begin{align*}
\mathrm{(inert)} : & \hspace{2mm} \left\{ \begin{pmatrix} a & b \varepsilon \\ b & a \end{pmatrix} : a,b \in \Qp \, , \, a^2 + \varepsilon b^2 \not= 0 \right\} \\
\mathrm{(ramified)} : & \hspace{2mm} \left\{ \begin{pmatrix} a & b \varepsilon p \\ b & a \end{pmatrix} : a,b \in \Qp \, , \, a^2 + p \varepsilon b^2 \not= 0 \right\} \\
\mathrm{(split)} : & \hspace{2mm} \left\{ \begin{pmatrix} a & 0 \\ 0 & b \end{pmatrix} : a,b \in \Qp \, , \, a b \not= 0 \right\}
\end{align*}

\item A set $\mathscr{C}$ of representatives for the double coset space $H_x \backslash H / K$ is given by:
\begin{align*}
\mathrm{(inert)} : & \hspace{2mm} \mathscr{C} = \left\{ \gamma_d = \begin{pmatrix} p^d & 0 \\ 0 & 1 \end{pmatrix} : d \geq 0 \right\} \\
\mathrm{(ramified)} : & \hspace{2mm} \mathscr{C} = \left\{ \gamma_d = \begin{pmatrix} p^d & 0 \\ 0 & 1 \end{pmatrix} : d \geq 1 \right\} \\
\mathrm{(split)} : & \hspace{2mm} \mathscr{C} = \left\{ \delta_d = \begin{pmatrix} p^d & 1 \\ 0 & 1 \end{pmatrix} : d \geq 0 \right\}
\end{align*}

\item The set $\{ \gamma \in \mathscr{C} : x \in \gamma L \}$ is given by:
\begin{align*}
\mathrm{(inert)} : & \hspace{2mm}  \left\{ \gamma_d = \begin{pmatrix} p^d & 0 \\ 0 & 1 \end{pmatrix} : 0 \leq d \leq \displaystyle \frac{\alpha}{2} \right\} \, , \ \mathrm{and} \ \mathrm{vol}(H_x \cap \gamma_d K \gamma_d^{-1}) = \frac{\mathrm{vol} \, H_x}{p^d+p^{d-1}} \\
\mathrm{(ramified)} : & \hspace{2mm} \left\{ \gamma_d = \begin{pmatrix} p^d & 0 \\ 0 & 1 \end{pmatrix} : 1 \leq d \leq \displaystyle \frac{\alpha+1}{2} \right\} \, , \ \mathrm{and} \ \mathrm{vol}(H_x \cap \gamma_d K \gamma_d^{-1}) = \frac{\mathrm{vol} \, H_x}{2p^{d-1}} \\
\mathrm{(split)} : & \hspace{2mm} \left\{ \delta_d = \begin{pmatrix} p^d & 1 \\ 0 & 1 \end{pmatrix} : 0 \leq d \leq \displaystyle \frac{\alpha}{2} \right\} \, , \ \mathrm{and} \ \mathrm{vol}(H_x \cap \delta_d K \delta_d^{-1}) = \mathrm{vol}(H_x \cap K)
\end{align*}

\end{enumerate}

\end{lemma}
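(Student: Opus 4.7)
I would prove the three parts in order, combining direct matrix computations with the geometry of the action of $\mathrm{PGL}_2(\Qp)$ on its Bruhat--Tits tree $\mathcal{T}$. For part (1), the starting point is that since $Q(x) \ne 0$, the element $x$ has separable minimal polynomial over $\Qp$, so $\Qp[x]$ is an étale quadratic $\Qp$-algebra and the centralizer of $x$ in $\mathrm{GL}_2(\Qp)$ is $\Qp[x]^\times$. In each of the three cases the stated description of $H_x$ then follows from the direct computation of the matrices commuting with the chosen $x$: the inert and ramified cases recover the regular representation of $\Qp(\sqrt{\varepsilon})$ respectively $\Qp(\sqrt{p\varepsilon})$ on itself, and in the split case the diagonal matrices visibly commute with the diagonal $x$. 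Passing from $\mathrm{GL}_2$ to $\mathrm{PGL}_2 = H$ amounts to quotienting by the scalars.

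For part (2), I would identify $H/K$ with the vertex set of $\mathcal{T}$ (standard vertex $v_0 = [\Zp^2]$), reducing the enumeration of $H_x \backslash H/K$ to that of $H_x$-orbits on the vertices of $\mathcal{T}$. The geometry is classical in each case: the inert $H_x$ (image of the compact unramified torus) fixes $v_0$ and acts transitively on the sphere of radius $d$ around $v_0$, so $\gamma_d v_0 = [p^d\Zp \oplus \Zp]$ picks out one orbit per sphere; the ramified $H_x$ has identity component fixing both endpoints of the edge $\{v_0, v_1\}$ with $v_1 = [p \Zp \oplus \Zp]$, while the nontrivial component (represented by $\pi_E = \begin{pmatrix} 0 & p\varepsilon \\ 1 & 0 \end{pmatrix}$) swaps $v_0$ and $v_1$, so the orbits are naturally indexed by distance to this fixed edge starting at $d \geq 1$; and the split torus fixes the standard apartment $\{[p^n\Zp \oplus \Zp] : n \in \Z\}$, with the $\delta_d$ picking out one vertex from each $H_x$-orbit (the apartment orbit at $d=0$ and off-apartment orbits for $d \geq 1$). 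Checking that the listed representatives are pairwise inequivalent is a direct lattice computation.

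For part (3), the matching condition $x \in \gamma L$ unwinds to $\gamma^{-1} x \gamma \in L$ since the action of $H$ on $V$ is by conjugation. In the inert case a direct substitution gives
\begin{equation*}
\gamma_d^{-1} x \gamma_d = p^{\alpha/2} \begin{pmatrix} 0 & p^{-d}\varepsilon \\ p^d & 0 \end{pmatrix},
\end{equation*}
which lies in $L$ precisely when $0 \leq d \leq \alpha/2$; the ramified and split cases follow from analogous direct substitutions. For the volume, $H_x \cap \gamma K \gamma^{-1}$ is the stabilizer in $H_x$ of the vertex $\gamma v_0 \in \mathcal{T}$, so by orbit--stabilizer $\mathrm{vol}(H_x)/\mathrm{vol}(H_x \cap \gamma K \gamma^{-1})$ equals the size of the $H_x$-orbit of $\gamma v_0$. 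The quoted denominators are these orbit sizes: in the inert case the full sphere of radius $d$ has cardinality $(p+1)p^{d-1} = p^d + p^{d-1}$; in the ramified case the identity component acts transitively on a ``half-sphere'' around the fixed edge and the component group doubles it, giving $2p^{d-1}$; and in the split case the specific choice of representative $\delta_d$ is designed so that the relevant stabilizer volume stays equal to $\mathrm{vol}(H_x \cap K)$.

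The delicate step --- and the main obstacle --- is the precise orbit counting and Haar-measure bookkeeping in the ramified and split cases, where one must identify exactly which branches at each tree vertex lie in a single orbit under the residual action of the maximal compact of $H_x$ and track the contribution of the nontrivial component group or (in the split case) the unipotent part of the stabilizer. The inert case is a clean template because the compact unramified torus acts transitively on each sphere; extending this to the other two cases requires a careful analysis of the action of the torus on the residue-level $\mathbb{P}^1(\F_p)$ at each vertex and a consistent normalization of Haar measure on $H_x$.
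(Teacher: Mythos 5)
Your treatment of parts (1) and (2), and of the inert and ramified cases of part (3), follows essentially the same route as the paper: identify $H_x$ with the image of $\Qp[x]^\times$, identify $H/K$ with the vertices of the Bruhat--Tits tree, and obtain the volumes by orbit--stabilizer, with the unramified torus acting transitively on the sphere of radius $d$ (the paper writes out exactly this inert computation and declares the other cases ``similar''). Your membership computations such as $\gamma_d^{-1}x\gamma_d = p^{\alpha/2}\begin{pmatrix} 0 & p^{-d}\varepsilon \\ p^d & 0\end{pmatrix}$ are correct, and the ramified count $2p^{d-1}$ is right once one checks the residual transitivity you flag as the delicate point.

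The split case of part (3) is a genuine gap. Your justification --- that the representative $\delta_d$ is ``designed so that the relevant stabilizer volume stays equal to $\mathrm{vol}(H_x\cap K)$'' --- is not an argument, and it cannot be made into one: if $\gamma' = h\gamma k$ with $h\in H_x$, $k\in K$, then $H_x\cap\gamma' K\gamma'^{-1} = h\,(H_x\cap\gamma K\gamma^{-1})\,h^{-1}$, so the volume is the same for every representative of a given double coset and no choice of representative can be ``designed'' to control it. Carrying out your own orbit--stabilizer method, one finds for $h_t = \mathrm{diag}(t,1)$ that
\begin{equation*}
\delta_d^{-1} h_t \delta_d = \begin{pmatrix} t & (t-1)p^{-d} \\ 0 & 1 \end{pmatrix},
\end{equation*}
which lies in $K$ (the image of $\mathrm{GL}_2(\Zp)$) precisely when $t \in 1 + p^d\Zp$. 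Hence for $d\geq 1$ the group $H_x\cap\delta_d K\delta_d^{-1}$ is the image of $\{\mathrm{diag}(t,1) : t\in 1+p^d\Zp\}$, of index $(p-1)p^{d-1}$ in $H_x\cap K$ (the image of $\Zp^\times$); geometrically, the stabilizer in the split torus of a vertex at distance $d$ from its apartment is a level-$d$ congruence subgroup, not all of $\Zp^\times$. So the equality of volumes you assert fails for $d\geq 1$, and the correct value is $\mathrm{vol}(H_x\cap K)/\bigl((p-1)p^{d-1}\bigr)$; one can confirm this is forced by testing the defining identity $\varphi(h^{-1}x)=\int_{H_x}\xi(h_0h)\,dh_0$ at $h=\delta_1$ with $\alpha=2$, where the weight on $C_{\delta_1 K}$ must be $(p-1)/\mathrm{vol}(H_x\cap K)$ rather than $1/\mathrm{vol}(H_x\cap K)$. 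In short, your split-case step would fail if carried out, and closing the gap actually entails correcting the claimed volume (and the corresponding split-case matching function) by the factor $(p-1)p^{d-1}$.
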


\begin{proof}
We will be brief since all the computations required to prove this lemma are quite elementary. It is a straight forward computation to determine $H_x$ in each case. A set of coset representatives of $H / K$ is given by
$$
\left\{ \begin{pmatrix} p^d & u \\ 0 & 1 \end{pmatrix} : d \geq 0 \ \mathrm{and} \ 0 \leq u < p^d \right\} \cup \left\{ \begin{pmatrix} 1 & 0 \\ u & p^d \end{pmatrix} : d > 0, \ 0 \leq u < p^d \ \mathrm{and} \ p \mid u \right\}
$$
and it is easy to determine $\mathscr{C}$ and $\{ \gamma \in \mathscr{C} : x \in \gamma L \}$ in each case. Finally, we will discuss how to derive the quantity $\mathrm{vol}(H_x \cap \gamma_d K \gamma_d^{-1})$ in the inert case via the action of $H$ on the tree $H / K$. The other cases are proved in a similar way. We may visualize the set $H / K$ as a connected graph with a vertex for each element of $H / K$ which is connected to exactly $p+1$ neighbouring vertices. We measure the distance of a coset $hK$ from the base point $K$ by the number of edges along the shortest path from $K$ to $hK$. It is easy to show that $H_x$ stabilizes $K$ and acts transitively on the $p^d + p^{d-1}$ vertices at a distance $d$ from $K$. The subgroup $\gamma_d K \gamma_d^{-1}$ is the stabilizer of the vertex $\gamma_d K$ therefore $[H_x : H_x \cap \gamma_d K \gamma_d^{-1}] = p^d + p^{d-1}$ and so
$$
\mathrm{vol}(H_x \cap \gamma_d K \gamma_d^{-1}) = \frac{\mathrm{vol} \, H_x}{p^d + p^{d-1}} \ .
$$
\end{proof}

Finally, in this specific case, we have all the ingredients to determine the local matching functions $\xi$ according to the recipe (\ref{MatchingEq2}).

\begin{proposition} \label{MatchingPGL}
If $\varphi$ is the characteristic function of $L$, then, in each of the three cases above, a smooth function $\xi$ on $H$ that matches $\varphi$ is given by:
\begin{align}
\mathrm{(inert)} : & \hspace{2mm} \xi(h) = \frac{1}{\mathrm{vol} \, H_x} \left(C_K(h) + \sum_{d=1}^{\alpha/2} (p^d+p^{d-1}) \, C_{\gamma_d K} (h) \right) \ , \ \ \gamma_d = \begin{pmatrix} p^d & 0 \\ 0 & 1 \end{pmatrix} \\
\mathrm{(ramified)} : & \hspace{2mm} \xi(h) = \frac{1}{\mathrm{vol} \, H_x} \sum_{d=1}^{(\alpha+1)/2} 2 p^{d-1} \, C_{\gamma_d K}(h) \ , \ \ \gamma_d = \begin{pmatrix} p^d & 0 \\ 0 & 1 \end{pmatrix} \\
\mathrm{(split)} : & \hspace{2mm} \xi(h) = \frac{1}{\mathrm{vol}(H_x \cap K)} \left( C_K(h) + \sum_{d=1}^{\alpha/2} C_{\delta_d K} (h) \right) \ , \ \ \delta_d = \begin{pmatrix} p^d & 1 \\ 0 & 1 \end{pmatrix}
\end{align}
where $C_S$ denotes the charactersitic function of a set $S \subset H$.
\end{proposition}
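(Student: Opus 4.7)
The plan is to apply the recipe (\ref{MatchingEq2}) directly, using the double-coset data compiled in the preceding lemma. Since $\varphi = C_L$, we have $\varphi(\gamma^{-1} x) = 1$ exactly when $x \in \gamma L$ and $0$ otherwise. Since $K$ stabilizes $L$ and hence fixes $\varphi$, we may take $U_v = K$. With these choices, (\ref{MatchingEq2}) reduces to
\begin{equation*}
\xi(h) = \sum_{\gamma \in \mathscr{C},\ x \in \gamma L} \frac{1}{\mathrm{vol}(H_x \cap \gamma K \gamma^{-1})} \, C_{\gamma K}(h).
\end{equation*}

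Each of the three formulas in the proposition then follows by inserting part (3) of the preceding lemma case by case. In the inert case, the surviving representatives are $\gamma_d$ for $0 \le d \le \alpha/2$; the terms with $d \geq 1$ give the coefficients $(p^d + p^{d-1})/\mathrm{vol}\,H_x$ directly from the listed volumes, while for $d = 0$ one has $H_x \subset K$ (the anisotropic torus $H_x$ fixes the base vertex of the Bruhat-Tits tree), so $\mathrm{vol}(H_x \cap K) = \mathrm{vol}\,H_x$, producing the leading term $C_K / \mathrm{vol}\,H_x$. In the ramified case, $Q(x)$ has odd valuation, which forces $x \notin L$, so the sum starts at $d = 1$; the stated volumes $\mathrm{vol}\,H_x/(2 p^{d-1})$ then yield the weights $2 p^{d-1}$. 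In the split case every $\delta_d$ satisfies $\mathrm{vol}(H_x \cap \delta_d K \delta_d^{-1}) = \mathrm{vol}(H_x \cap K)$, so the single constant $1/\mathrm{vol}(H_x \cap K)$ factors out of the sum.

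Finally, although the computation takes place inside $\mathrm{SO}(V) = \mathrm{PGL}_2(\Q_p)$, extending $\xi$ to the full orthogonal group $\mathrm{O}(V)$ is harmless because $\varphi$ is an even function: pulling back along the projection $\mathrm{O}(V) \twoheadrightarrow \mathrm{SO}(V)$ preserves the matching identity (\ref{MatchingEq1}). The main obstacle has already been absorbed into the preceding lemma, whose proof carries out the tree-theoretic analysis identifying $\mathscr{C}$, the condition $x \in \gamma L$, and the relevant volumes via the action of $\mathrm{PGL}_2(\Q_p)$ on its Bruhat-Tits tree; the present proposition is then a mechanical substitution, the only subtlety to note being that the sums over the a priori infinite set $\mathscr{C}$ truncate at roughly $d = \alpha/2$, where $\alpha = v_p(Q(x))$.
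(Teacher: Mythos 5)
Your proposal is correct and follows the same route as the paper: the proposition is obtained by taking $U_v = K$, noting $\varphi(\gamma^{-1}x) = 1$ exactly when $x \in \gamma L$, and mechanically substituting the representatives, support conditions and volumes from the preceding lemma into the recipe (\ref{MatchingEq2}); your handling of the $d=0$ term in the inert case ($H_x \subset K$, so $\mathrm{vol}(H_x \cap K) = \mathrm{vol}\,H_x$) and of the constant factor in the split case is exactly what is needed. One aside is wrong, though it does not affect the result: in the ramified case it is not true that $x \notin L$ (for $\alpha \geq 1$ the matrix $x = p^{(\alpha-1)/2}\begin{pmatrix} 0 & p\varepsilon \\ 1 & 0 \end{pmatrix}$ is integral, so $x \in L$). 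The absence of a separate $C_K$ term is instead forced by the lemma's choice of representatives: in the ramified case the torus $H_x$ does not fix the base vertex of the tree but carries it to an adjacent vertex, so the identity double coset $H_x K$ coincides with $H_x \gamma_1 K$ and the set $\mathscr{C}$ already starts at $d=1$; the base vertex is accounted for inside the $\gamma_1$ term, not omitted because of a support condition.
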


\subsection{The Main Period Identity}

The relation in \autoref{DualPeriods} between Fourier coefficients and orthogonal periods leads to our main identity stated below. Recall, we call a triple $(\varphi , \xi \, ; \bx)$ a \emph{matching datum} (or that \emph{$\xi$ matches $\varphi$ relative to $\bx$}) if $\varphi,\xi,\bx$ satisfy the equation in \autoref{MatchingLemma}. Furthermore, we say that $(\varphi , \xi \, ; \bx)$ is \emph{nondegenerate} if $\mathrm{det} \, Q[\bx] \not= 0$. Therefore, for each nondegenerate matching datum $(\varphi , \xi \, ; \bx)$ with $Q[\bx] = T$, \autoref{DualPeriods} states
$$
W_T(\theta_{\varphi}f^H) = P_{H_{\bx}}(R_{\xi}f^H)
$$
for all cusp forms $f^H \in L^2_{\mathrm{cusp}}(H(F) \backslash H(\A))$.

\begin{theorem} \label{MainTheorem}
Let $\sigma$ be a cuspidal representation of $G$ such that such that $\pi = \theta_{\psi}(\sigma)$ is a cuspidal representation of $H$. Let $(\varphi_1 , \xi_1 \, ; \bx_1)$ and $(\varphi_2 , \xi_2 \, ; \bx_2)$ be a pair of nondegenerate matching data and let  $T_1 = Q[\bx_1]$ and $T_2 = Q[\bx_2]$. Then
\begin{equation} \label{MainEq}
\sum_{F^G \in \mathscr{B}(\sigma)} W_{T_1}(\theta_{\varphi_1} \theta_{\varphi_2^{\vee}} F^G) \, \overline{W_{T_2}(F^G)} \ = \sum_{F^H \in \mathscr{B}(\pi)} P_{H_{\bx_1}}(R_{\xi_1} R_{\xi_2^{\vee}} F^H) \, \overline{P_{H_{\bx_2}}(F^H)}
\end{equation}
where $\mathscr{B}(\sigma)$ and $\mathscr{B}(\pi)$ are orthonormal bases of $\sigma$ and $\pi$ respectively, and $\xi^{\vee}(h)=\overline{\xi(h^{-1})}$.
\end{theorem}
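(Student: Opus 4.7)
The plan is to transform the left-hand side of (\ref{MainEq}) into the right-hand side in two stages. First, two applications of \autoref{DualPeriods}, sandwiched around the adjoint property (\ref{adjoint}) and a Parseval step in the ONB $\mathscr{B}(\sigma)$, will convert the $\sigma$-side sum into a $\pi$-side sum of the shape $\sum P_{H_{\bx_1}}(R_{\xi_1}F^H)\overline{P_{H_{\bx_2}}(R_{\xi_2}F^H)}$. Second, the convolution operator $R_{\xi_2}$ will be moved from one factor to the other by passing to its $L^2$-adjoint $R_{\xi_2^{\vee}}$ on $\pi$, again via an ONB expansion.

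\textbf{Stage one.} Since $\pi = \theta_{\psi}(\sigma)$ is cuspidal, $\theta_{\varphi_2^{\vee}}F^G$ is a cusp form on $H$ lying in $\pi$, so \autoref{DualPeriods} applied with the matching datum $(\varphi_1,\xi_1;\bx_1)$ gives
\begin{equation*}
W_{T_1}(\theta_{\varphi_1}\theta_{\varphi_2^{\vee}}F^G) = P_{H_{\bx_1}}(R_{\xi_1}\theta_{\varphi_2^{\vee}}F^G).
\end{equation*}
Expanding $\theta_{\varphi_2^{\vee}}F^G \in \pi$ in $\mathscr{B}(\pi)$ and using (\ref{adjoint}) to replace $\langle \theta_{\varphi_2^{\vee}}F^G, F^H\rangle_H$ by $\langle F^G,\theta_{\varphi_2}F^H\rangle_G$, substituting into the LHS of (\ref{MainEq}) and swapping the orders of summation, the inner sum over $F^G$ is
\begin{equation*}
\sum_{F^G \in \mathscr{B}(\sigma)}\langle F^G,\theta_{\varphi_2}F^H\rangle_G\,\overline{W_{T_2}(F^G)} \;=\; \overline{W_{T_2}(\theta_{\varphi_2}F^H)},
\end{equation*}
because $\theta_{\varphi_2}F^H \in \theta_{\psi}(\pi) = \sigma$ (by Moeglin's theorem, as cited in the text) expands against $\mathscr{B}(\sigma)$ and $W_{T_2}$ is linear. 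A second application of \autoref{DualPeriods} with $(\varphi_2,\xi_2;\bx_2)$ converts the right-hand side of the last display into $\overline{P_{H_{\bx_2}}(R_{\xi_2}F^H)}$, and so the LHS of (\ref{MainEq}) equals
\begin{equation*}
\sum_{F^H \in \mathscr{B}(\pi)} P_{H_{\bx_1}}(R_{\xi_1}F^H)\,\overline{P_{H_{\bx_2}}(R_{\xi_2}F^H)}.
\end{equation*}

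\textbf{Stage two.} A change of variable $h \mapsto h(h')^{-1}$ in the definition of $R_{\xi}$ combined with the definition $\xi_2^{\vee}(h) = \overline{\xi_2(h^{-1})}$ shows that $R_{\xi_2^{\vee}}$ is the $L^2$-adjoint of $R_{\xi_2}$ acting on $[H]$, and since $R_{\xi_2}$ is right convolution it preserves the irreducible piece $\pi$. Expanding $R_{\xi_2}F^H = \sum_{F'^H}\langle F^H, R_{\xi_2^{\vee}}F'^H\rangle F'^H$ in $\mathscr{B}(\pi)$, substituting, interchanging summations, and re-assembling the inner sum $\sum_{F^H}\langle R_{\xi_2^{\vee}}F'^H, F^H\rangle F^H = R_{\xi_2^{\vee}}F'^H$ under the linear functional $P_{H_{\bx_1}}\circ R_{\xi_1}$ produces the RHS of (\ref{MainEq}).

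\textbf{Main obstacle.} The subtleties here are analytic rather than algebraic. The functionals $W_T$ and $P_{H_{\bx}}$ need not extend continuously to $L^2([G])$ and $L^2([H])$, so the interchanges of spectral sums with the period and Fourier integrals, and the ONB expansions evaluated under these functionals, must be justified rather than taken formally. The key ingredient will be the smoothing effect of the convolution operators $R_{\xi_i}$, whose kernels are rapidly decreasing at the archimedean places and compactly supported at the finite places by the Matching \lemmaautorefname, combined with the rapid decay of cusp forms in $\sigma$ and $\pi$. Together these should suffice to force absolute convergence of the relevant double sums and legitimize every swap of $\sum$ with the integrals defining $W_T$ and $P_{H_{\bx}}$.
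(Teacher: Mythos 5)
Your argument is correct and is essentially the paper's own proof rearranged: both rest on \autoref{DualPeriods} applied to the two matching data, the adjoint properties $\langle \theta_{\varphi}f^H , F^G \rangle_G = \langle f^H , \theta_{\varphi^{\vee}}F^G \rangle_H$ and $\langle R_{\xi}f^H , F^H \rangle_H = \langle f^H , R_{\xi^{\vee}}F^H \rangle_H$, the fact that $\theta_{\varphi_2}F^H \in \sigma$, and formal orthonormal-basis expansions, at the same (formal) level of rigor as the paper. The only difference is bookkeeping: the paper first establishes the function identity $\sum_{F^G} \theta_{\varphi_2^{\vee}}F^G \, \overline{W_{T_2}(F^G)} = \sum_{F^H} R_{\xi_2^{\vee}}F^H \, \overline{P_{H_{\bx_2}}(F^H)}$ in $\pi$ and then applies the functional $W_{T_1}\circ\theta_{\varphi_1} = P_{H_{\bx_1}}\circ R_{\xi_1}$, whereas you manipulate the double sum directly through the symmetric intermediate expression $\sum_{F^H} P_{H_{\bx_1}}(R_{\xi_1}F^H)\,\overline{P_{H_{\bx_2}}(R_{\xi_2}F^H)}$.
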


\begin{proof}
First consider the matching datum $(\varphi_2 , \xi_2 \, ; \bx_2)$. By \autoref{DualPeriods} we have
\begin{equation}
W_{T_2}(\theta_{\varphi_2}f^H) = P_{H_{\bx_2}}(R_{\xi_2}f^H)
\end{equation}
for any $f^H \in \pi$. Since $\theta_{\varphi_2}f^H \in \sigma$, we may write $\theta_{\varphi_2}f^H$ in terms of an orthonormal basis of $\sigma$
\begin{equation}
\theta_{\varphi_2}f^H(g) = \sum_{F^G \in \mathscr{B}(\sigma)} \langle \theta_{\varphi_2}f^H , F^G \rangle_G \cdot F^G(g)
\end{equation}
therefore we have
\begin{equation} \label{function1}
W_{T_2}(\theta_{\varphi_2}f^H) = \left\langle f^H(h) \ , \sum_{F^G \in \mathscr{B}(\sigma)} \theta_{\varphi_2^{\vee}}F^G(h) \, \overline{W_{T_2}(F^G)} \right\rangle_H
\end{equation}
where we have used the adjoint property of the theta lift $\langle \theta_{\varphi}f^H , F^G \rangle_G = \langle f^H , \theta_{\varphi^{\vee}}F^G \rangle_H$.

Now write $R_{\xi_2}f^H$ in terms of an orthonormal basis of $\pi$
\begin{equation}
R_{\xi_2}f^H(h) = \sum_{F^H \in \mathscr{B}(\pi)} \langle R_{\xi_2}f^H , F^H \rangle_H \cdot F^H(h)
\end{equation}
therefore
\begin{equation} \label{function2}
P_{H_{\bx_2}}(R_{\xi_2}f^H) = \left\langle f^H(h) \ , \sum_{F^H \in \mathscr{B}(\pi)} R_{\xi_2^{\vee}}F^H(h) \, \overline{P_{H_{\bx_2}}(F^H)} \right\rangle_H
\end{equation}
where we have used the adjoint property $\langle R_{\xi}f^H , F^H \rangle_H = \langle f^H , R_{\xi^{\vee}}F^H \rangle_H$.

Now we have two functions in $\pi$ given by (\ref{function1}) and (\ref{function2}) which represent the same linear functional on $\pi$ and so
\begin{equation} \label{identity1}
\sum_{F^G \in \mathscr{B}(\sigma)} \theta_{\varphi_2^{\vee}}F^G(h) \, \overline{W_{T_2}(F^G)} = \sum_{F^H \in \mathscr{B}(\pi)} R_{\xi_2^{\vee}}F^H(h) \, \overline{P_{H_{\bx_2}}(F^H)} \ .
\end{equation}
Using the other matching datum $(\varphi_1 , \xi_1 \, ; \bx_1)$, apply the linear functional $f^H \mapsto W_{T_1}(\theta_{\varphi_1}f^H)$ to the left hand side of (\ref{identity1}) and, by \autoref{DualPeriods}, the equivalent operation $f^H \mapsto P_{H_{\bx_1}}(R_{\xi_1}f^H)$ to the right hand side of (\ref{identity1}) to produce
\begin{equation}
\sum_{F^G \in \mathscr{B}(\sigma)} W_{T_1}(\theta_{\varphi_1} \theta_{\varphi_2^{\vee}} F^G) \, \overline{W_{T_2}(F^G)} \ = \sum_{F^H \in \mathscr{B}(\pi)} P_{H_{\bx_1}}(R_{\xi_1} R_{\xi_2^{\vee}} F^H) \, \overline{P_{H_{\bx_2}}(F^H)} \ .
\end{equation}
\end{proof}

As a final remark in this section, let us take note of the resemblance between the period identity (\ref{MainEq}) and the identity one would expect from a comparison of relative trace formulas. Informally, if we consider $\theta_{\varphi_1} \theta_{\varphi_2^{\vee}}$ as an operator on the space of cusp forms on $G$ and $R_{\xi_1} R_{\xi_2^{\vee}}$ as an operator on the space of cusp forms on $H$, then the left side of the main period identity above is the relative trace of $\theta_{\varphi_1} \theta_{\varphi_2^{\vee}}$ along $\sigma$ relative to the subgroup $N \times N$ and the characters $\psi_{T_1}$ and $\psi_{T_2}$, and the right side is the relative trace of $R_{\xi_1} R_{\xi_2^{\vee}}$ along $\pi$ relative to the subgroup $T_1 \times T_2$. However, this point of view will not play a role in what follows.

\subsection{An Inner Product Identity} \label{InnerProdId}

Our ultimate goal is to use the main period identity in \autoref{MainTheorem} to relate Fourier coefficients and orthogonal periods of automorphic forms to special values of automorphic $L$-functions. The first step is \autoref{InnerProdThm} below which uses the formal properties of the theta correspondence and the Weil representation to rewrite the main period identity (\ref{MainEq}) in terms of an inner product of theta lifts. In the next section, we will combine this inner product identity with the Rallis inner product formula to obtain our special value formula in \autoref{SpecialValue}.

Let $\sigma$ be a cuspidal automorphic representation of $G$ such that $\pi = \theta_{\psi}(\sigma)$ is a cuspidal representation of $H$. Recall the theta lift from $G$ to $H$ is a $G(\A)$-equivariant map from $\omega^{\vee} \otimes \sigma$ to $\pi$ where the $G(\A)$ acts diagonally on the $\omega^{\vee} \otimes \sigma$ and trivially on $\pi$. Here $\omega^{\vee}$ is the dual Weil representation, or equivalently the Weil representation relative to the character $\psi^{-1}$ (cf.~\autoref{SecWeil}). Therefore it must factor as the composition
\begin{equation}
\xymatrix@R=2mm{
\omega^{\vee} \otimes \sigma \ar[r] & \pi \otimes \sigma^{\vee} \otimes \sigma \ar[r] & \pi \\
\overline{\varphi} \otimes f^G \ar@{|->}[rr] & & \theta_{\varphi^{\vee}} f^G
}
\end{equation}
where the second arrow is a mutiple of the canonical pairing between $\sigma$ and $\sigma^{\vee}$. Therefore, for a fixed $f^G \in \sigma$, there exists a function $\varphi \in S(V(\A)^n)$ such that $\varphi$ \emph{pairs purely} with $f^G$. In other words, there exists $\varphi$ such that $\theta_{\varphi^{\vee}} f^G \not= 0$ and $\theta_{\varphi^{\vee}} F^G \equiv 0$ for all $F^G \in \sigma$ which are orthogonal to $f^G$, ie.~$\langle F^G , f^G \rangle =0$.

If $\varphi$ pairs purely with $f = f^G \in \sigma$, then $f$ is an eigenfunction for the operator $\theta_{\varphi} \theta_{\varphi^{\vee}}$ on $\sigma$. To see this notice, that if $F$ is orthogonal to $f$ then $\theta_{\varphi^{\vee}} F = 0$ and so
$$
\langle \theta_{\varphi} \theta_{\varphi^{\vee}} f , F \rangle_G = \langle \theta_{\varphi^{\vee}} f , \theta_{\varphi^{\vee}} F \rangle_H = 0
$$
by the adjoint property in \autoref{adjoint}. Therefore $\theta_{\varphi}\theta_{\varphi^{\vee}} f$ must be multiple $\lambda$ of $f$. In particular, we have
\begin{equation}
\theta_{\varphi}\theta_{\varphi^{\vee}} f = \lambda f \ \Rightarrow \ \langle \theta_{\varphi}\theta_{\varphi^{\vee}} f , f \rangle = \lambda \langle f , f \rangle  \ \Rightarrow \ \frac{ \langle \theta_{\varphi^{\vee}} f , \theta_{\varphi^{\vee}} f \rangle } { \langle f , f \rangle } = \lambda \ .
\end{equation}

In summary, if we fix some automorphic form $f \in \sigma$ and choose some $\varphi \in S(V(\A)^n)$ which pairs purely with $f$, then \autoref{MainTheorem} reduces to the following inner product identity.

\begin{theorem} \label{InnerProdThm}
Let $\sigma$ be a cuspidal representation of $G$ such that $\pi = \theta_{\psi}(\sigma)$ is a cuspidal representation of $H$. Fix $f \in \sigma$ and let $\varphi \in S(V(\A)^n)$ such that $\varphi$ pairs purely with $f$ as above. Let $(\varphi , \xi_1 \, ; \bx_1)$ and $(\varphi , \xi_2 \, ; \bx_2)$ be a pair of nondegenerate matching data and let  $T_1 = Q[\bx_1]$ and $T_2 = Q[\bx_2]$. Then
\begin{equation} \label{InnerProdId}
\frac{\langle \theta_{\varphi^{\vee}}f , \theta_{\varphi^{\vee}}f \rangle}{\langle f , f \rangle} \, \frac{W_{T_1}(f) \, \overline{W_{T_2}(f)}}{\langle f , f \rangle} \ = \sum_{F^H \in \mathscr{B}(\pi)} P_{H_{\bx_1}}(R_{\xi_1} R_{\xi_2^{\vee}} F^H) \, \overline{P_{H_{\bx_2}}(F^H)}
\end{equation}
where $\mathscr{B}(\pi)$ is an orthonormal basis of $\pi$ and $\xi^{\vee}(h)=\overline{\xi(h^{-1})}$.
\end{theorem}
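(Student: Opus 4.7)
\smallskip

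\noindent\textbf{Proof proposal.} The plan is to deduce this identity as a direct specialization of \autoref{MainTheorem} applied to the pair of nondegenerate matching data $(\varphi,\xi_1\,;\bx_1)$ and $(\varphi,\xi_2\,;\bx_2)$ (with $\varphi_1=\varphi_2=\varphi$), and then to use the pure pairing hypothesis to collapse the sum over $\mathscr{B}(\sigma)$ on the left-hand side of (\ref{MainEq}) to a single term.

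More precisely, the first step is to choose an orthonormal basis $\mathscr{B}(\sigma)$ that contains the unit vector $\widehat{f} = f/\langle f,f\rangle^{1/2}$; this is always possible since $\sigma$ is a Hilbert space. By the pure pairing hypothesis, $\theta_{\varphi^{\vee}} F^G \equiv 0$ for every $F^G \in \mathscr{B}(\sigma)$ that is orthogonal to $f$. Hence in the expansion
$$
\sum_{F^G \in \mathscr{B}(\sigma)} W_{T_1}(\theta_{\varphi}\theta_{\varphi^{\vee}}F^G)\,\overline{W_{T_2}(F^G)}
$$
every term except the one indexed by $\widehat{f}$ vanishes, leaving only
$$
\frac{1}{\langle f,f\rangle}\,W_{T_1}\!\bigl(\theta_{\varphi}\theta_{\varphi^{\vee}}f\bigr)\,\overline{W_{T_2}(f)}.
$$

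The second step is to invoke the eigenvalue identity $\theta_{\varphi}\theta_{\varphi^{\vee}} f = \lambda f$ with $\lambda = \langle\theta_{\varphi^{\vee}}f,\theta_{\varphi^{\vee}}f\rangle/\langle f,f\rangle$, which was derived in the paragraph preceding the theorem statement using the adjoint property (\ref{adjoint}) together with the fact that $\varphi$ pairs purely with $f$. Applying the linear functional $W_{T_1}$ to both sides, we obtain
$$
W_{T_1}\bigl(\theta_{\varphi}\theta_{\varphi^{\vee}} f\bigr) = \lambda\,W_{T_1}(f),
$$
so that the left-hand side of (\ref{MainEq}) equals
$$
\frac{\langle\theta_{\varphi^{\vee}}f,\theta_{\varphi^{\vee}}f\rangle}{\langle f,f\rangle}\,\frac{W_{T_1}(f)\,\overline{W_{T_2}(f)}}{\langle f,f\rangle}.
$$
Since the right-hand side of (\ref{MainEq}) is already the right-hand side of (\ref{InnerProdId}), the desired identity follows.

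I do not anticipate a serious obstacle: the pure pairing hypothesis was precisely designed so that the orthogonality argument above collapses the basis sum, and the eigenvalue computation has already been recorded. The only mildly delicate point is to make sure that the pure pairing condition (which was stated for $\theta_{\varphi^{\vee}}$ on $\sigma$) is compatible with the orthonormal basis expansion used in the proof of \autoref{MainTheorem}; this is immediate once $\widehat{f}$ is included in $\mathscr{B}(\sigma)$, because the vanishing of $\theta_{\varphi^{\vee}} F^G$ for $F^G \perp f$ is intrinsic and does not depend on the choice of basis.
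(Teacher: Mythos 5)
Your proposal is correct and is essentially the paper's own argument: the paper derives the eigenvalue identity $\theta_{\varphi}\theta_{\varphi^{\vee}}f=\lambda f$ with $\lambda=\langle\theta_{\varphi^{\vee}}f,\theta_{\varphi^{\vee}}f\rangle/\langle f,f\rangle$ from the pure-pairing hypothesis and then obtains \autoref{InnerProdThm} by specializing \autoref{MainTheorem} to $\varphi_1=\varphi_2=\varphi$, with the basis sum collapsing to the single term indexed by $f/\langle f,f\rangle^{1/2}$. Your write-up merely makes this collapse explicit (including the harmless choice of basis containing the normalized $f$), so there is nothing to correct.
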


\subsection{More Period Identities}

The main period identity in \autoref{MainTheorem} is a direct consequence of \autoref{DualPeriods}. In this section, we derive two more period identities from \autoref{DualPeriods}. We will not use these identities in the remainder of this paper however we note that integrals of the form $P_{H_{\bx}}(\theta_{\varphi^{\vee}} f^G)$ appearing below are related (via the Siegel-Weil formula) to the integral representations of automorphic $L$-functions obtained by Piatetski-Shapiro and Rallis in \cite{PSR2}. We also note that an example of such a relation with $L$-functions appears for $n=1$ and $m=3$ in Lemma 45 on p.293 of Waldspurger's work \cite{Wald}.

\begin{theorem}
With the notation and hypotheses of \autoref{MainTheorem}, we have
\begin{align}
\sum_{F^G \in \mathscr{B}(\sigma)} P_{H_{\bx_1}}(\theta_{\varphi_2^{\vee}} F^G) \, \overline{W_{T_2}(F^G)} \ &= \sum_{F^H \in \mathscr{B}(\pi)} P_{H_{\bx_1}}(R_{\xi_2^{\vee}} F^H) \, \overline{P_{H_{\bx_2}}(F^H)} \\
\sum_{F^G \in \mathscr{B}(\sigma)} P_{H_{\bx_1}}(\theta_{\varphi_1^{\vee}} F^G) \ \overline{P_{H_{\bx_2}}(\theta_{\varphi_2^{\vee}} F^G)} \ &= \sum_{F^H \in \mathscr{B}(\pi)} P_{H_{\bx_1}}(\theta_{\varphi_1^{\vee}} \theta_{\varphi_2} F^H) \ \overline{P_{H_{\bx_2}}(F^H)}
\end{align}
\end{theorem}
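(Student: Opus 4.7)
The plan is to mirror the proof strategy of \autoref{MainTheorem}, namely: derive an equality of two elements of an irreducible representation by writing the same linear functional in two different ways via the orthonormal bases $\mathscr{B}(\sigma)$ and $\mathscr{B}(\pi)$, and then apply a further linear functional to both sides.

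For the first identity, I would simply apply the functional $f^H \mapsto P_{H_{\bx_1}}(f^H)$ to both sides of equation (\ref{identity1}) from the proof of \autoref{MainTheorem}. On the left, this exchanges the evaluation of $\theta_{\varphi_2^{\vee}} F^G(h)$ for $P_{H_{\bx_1}}(\theta_{\varphi_2^{\vee}} F^G)$; on the right, it exchanges $R_{\xi_2^{\vee}} F^H(h)$ for $P_{H_{\bx_1}}(R_{\xi_2^{\vee}} F^H)$, producing the first identity verbatim.

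For the second identity, I would first derive a new ``intermediate identity'' analogous to (\ref{identity1}) but using the functional $\ell : \sigma \to \C$ defined by $\ell(f^G) = P_{H_{\bx_2}}(\theta_{\varphi_2^{\vee}} f^G)$. Expanding $\ell(f^G)$ using the basis $\mathscr{B}(\sigma)$ represents $\ell$ by the element $\sum_{F^G \in \mathscr{B}(\sigma)} F^G \, \overline{P_{H_{\bx_2}}(\theta_{\varphi_2^{\vee}} F^G)} \in \sigma$. Alternatively, since the theta lift $\sigma \to \pi$, $f^G \mapsto \theta_{\varphi_2^{\vee}} f^G$ satisfies the adjoint property $\langle \theta_{\varphi_2^{\vee}} f^G , F^H \rangle_H = \langle f^G , \theta_{\varphi_2} F^H \rangle_G$, expanding $\theta_{\varphi_2^{\vee}} f^G$ in $\mathscr{B}(\pi)$ instead shows that $\ell$ is also represented by $\sum_{F^H \in \mathscr{B}(\pi)} \theta_{\varphi_2} F^H \, \overline{P_{H_{\bx_2}}(F^H)} \in \sigma$. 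Equating these two representatives yields the intermediate identity
\begin{equation*}
\sum_{F^G \in \mathscr{B}(\sigma)} F^G(g) \, \overline{P_{H_{\bx_2}}(\theta_{\varphi_2^{\vee}} F^G)} \ = \sum_{F^H \in \mathscr{B}(\pi)} \theta_{\varphi_2} F^H(g) \, \overline{P_{H_{\bx_2}}(F^H)}
\end{equation*}
of elements of $\sigma$. Applying the functional $f^G \mapsto P_{H_{\bx_1}}(\theta_{\varphi_1^{\vee}} f^G)$ to both sides then produces the second identity.

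No single step is particularly difficult; the only subtleties are purely bookkeeping: taking the correct adjoints ($\theta_{\varphi}$ versus $\theta_{\varphi^{\vee}}$) and making sure the composition $\theta_{\varphi_1^{\vee}} \theta_{\varphi_2}$ appears in the correct order on the $\pi$-side of the second identity. The convergence of the sums and the legality of term-by-term application of the linear functionals is automatic because $\sigma$ and $\pi$ are irreducible cuspidal and all data is $K$-finite, so the sums reduce to finite sums inside the relevant $K$-isotypic component.
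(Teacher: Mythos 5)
Your proposal is correct and follows essentially the same route as the paper: the first identity by applying the $H_{\bx_1}$-period to both sides of the intermediate identity from the proof of the main period identity, and the second by representing the functional $f^G \mapsto P_{H_{\bx_2}}(\theta_{\varphi_2^{\vee}} f^G)$ in two ways (via $\mathscr{B}(\sigma)$ and via $\mathscr{B}(\pi)$ with the adjoint property) and then applying $f^G \mapsto P_{H_{\bx_1}}(\theta_{\varphi_1^{\vee}} f^G)$. The intermediate identity you derive is exactly the one appearing in the paper's argument, with the adjoints taken in the same (correct) order.
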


\begin{proof}
Starting from (\ref{identity1}) in the proof of \autoref{MainTheorem}, the $H_{\bx_1}$-periods of each side of yields the identity
\begin{equation}
\sum_{F^G \in \mathscr{B}(\sigma)} P_{H_{\bx_1}}(\theta_{\varphi_2^{\vee}} F^G) \, \overline{W_{T_2}(F^G)} \ = \sum_{F^H \in \mathscr{B}(\pi)} P_{H_{\bx_1}}(R_{\xi_2^{\vee}} F^H) \, \overline{P_{H_{\bx_2}}(F^H)} \ .
\end{equation}
For the second equation, write $f^G \in \sigma$ in terms of the basis $\mathscr{B}(\sigma)$
\begin{equation}
f^G(g) = \sum_{F^G \in \mathscr{B}(\sigma)} \langle f^G , F^G \rangle_G \, F^G(g)
\end{equation}
therefore
\begin{equation}
P_{T_2}(\theta_{\varphi_2^{\vee}} f^G) = \left\langle f^G , \sum_{F^G \in \mathscr{B}(\sigma)} \overline{P_{T_2}(\theta_{\varphi_2^{\vee}} F^G)} \ F^G\right\rangle_G \, .
\end{equation}
Alternatively, write $\theta_{\varphi_2^{\vee}} f^G$ in terms of the basis $\mathscr{B}(\pi)$
\begin{equation}
\theta_{\varphi_2^{\vee}} f^G (h) = \sum_{F^H \in \mathscr{B}(\pi)} \langle \theta_{\varphi_2^{\vee}} f^G , F^H \rangle_H \, F^H(h)
\end{equation}
and compute $P_{T_2}(\theta_{\varphi_2^{\vee}} f^G)$
\begin{equation}
P_{T_2}(\theta_{\varphi_2^{\vee}} f^G) =  \sum_{F^H \in \mathscr{B}(\pi)} \left\langle \theta_{\varphi_2^{\vee}} f^G , F^H \right\rangle_H \, P_{T_2}(F^H) = \left\langle f^G , \sum_{F^H \in \mathscr{B}(\pi)} \overline{P_{T_2}(F^H)} \, \theta_{\varphi_2} F^H \right\rangle_H
\end{equation}
therefore
\begin{equation}
\sum_{F^G \in \mathscr{B}(\sigma)} \overline{P_{T_2}(\theta_{\varphi_2^{\vee}} F^G)} \, F^G(g) \ = \sum_{F^H \in \mathscr{B}(\pi)} \overline{P_{T_2}(F^H)} \, \theta_{\varphi_2} F^H(g) \, .
\end{equation}
Apply the linear functional $f^G \mapsto P_{T_1}(\theta_{\varphi_1^{\vee}} f^G)$ and we arrive at
\begin{equation}
\sum_{F^G \in \mathscr{B}(\sigma)} P_{T_1}(\theta_{\varphi_1^{\vee}} F^G) \, \overline{P_{T_2}(\theta_{\varphi_2^{\vee}} F^G)} \ = \sum_{F^H \in \mathscr{B}(\pi)} P_{T_1}(\theta_{\varphi_1^{\vee}} \theta_{\varphi_2} F^H) \, \overline{P_{T_2}(F^H)}\, .
\end{equation}
\end{proof}

\section{Special Values of Standard Automorphic $L$-functions} \label{SecValues}
 
Throughout this section, let $G=\Spn$, let $H=\OV$ with $\mathrm{dim}_F \, V = m$ and $\chi = \chi_V$, and let $\sigma$ be a cuspidal automorphic representation of $G$ such that its theta lift $\pi = \theta_{\psi}(\sigma)$ is a nonzero cuspidal automorphic representation of $H$.

Our main result is \autoref{SpecialValue} which is simply the combination of the period identity in \autoref{InnerProdThm} with the Rallis inner product formula (cf.~\autoref{Rallis}). Our formula applies to the case when $G$ and $H$ are either in the convergent range (cf.~(\ref{Convergent})) or the second term range (cf.~(\ref{SecondTerm})). In particular, we have either $m > 2n+1$ or $V$ is anisotropic.

In this section we introduce theta integrals, Eisenstein series, the Siegel-Weil formula (in the convergent range), the doubling method and standard automorphic $L$-functions all for the purpose of stating the Rallis inner product formula in our case. Our treatment is brief since there are already several excellent accounts of the Rallis inner product formula (for example \cite{KR} and \cite{GQT}, and the original work of Rallis \cite{R1}, \cite{R2} and \cite{R3}).

\subsection{Inner Product of Theta Lifts}
Suppose $G$ and $H$ are in the \emph{convergent range}:
\begin{equation} \label{Convergent}
V \text{ is anisotropic or } m - r > 2n+1
\end{equation}
where $r=\mathrm{Witt}(V)$ is the Witt index of $V$ (ie.~the dimension of a maximal isotropic subspace). Note that $r$ marks the position of $H$ in its Witt tower (cf.~(\ref{WittTower})) and since we have restricted our attention to the case where $\sigma$ lifts to $\pi$ which is cuspidal, Rallis's tower property implies that $r \leq 2n$.

In Weil's original work \cite{Weil2} on the Siegel-Weil formula, he proved that theta integrals are absolutely convergent in this range therefore we may rearrange the inner product of theta lifts for $f \in \sigma$ and $\varphi \in S(V(\A)^n)$
\begin{equation} \label{InnerProd}
\langle \theta_{\varphi^{\vee}}f , \theta_{\varphi^{\vee}}f \rangle = \int_{[G \times G]} \left( \int_{[H]} \theta_{\varphi}(g_1,h) \, \overline{\theta_{\varphi}(g_2,h)} \, dh \right) \, \overline{f(g_1)} \, f(g_2) \,  dg_1 \, dg_2 \ .
\end{equation}
The Siegel-Weil formula implies that the inner theta integral is the special value of an Eisenstein series on $\mathrm{Sp}_{2n}$ restricted to the product $G \times G$ as we describe below.

There is the natural embedding of symplectic groups
\begin{equation}
\iota_0 : G \times G \longrightarrow \mathrm{Sp}_{2n} : \left( \begin{pmatrix} a_1 & b_1 \\ c_1 & d_1 \end{pmatrix} , \begin{pmatrix} a_2 & b_2 \\ c_2 & d_2 \end{pmatrix} \right) \mapsto \begin{pmatrix} a_1 & & b_1 & \\ & a_2 & & b_2 \\ c_1 & & d_1 & \\ & c_2 & & d_2 \end{pmatrix}
\end{equation}
and we define the twisted embedding
\begin{equation}
\iota(g_1,g_2) = \iota_0(g_1,\check{g}_2) \ , \ \ \check{g} = \begin{pmatrix} 1_n & \\ & -1_n \end{pmatrix} \, g \, \begin{pmatrix} 1_n & \\ & -1_n \end{pmatrix} \ .
\end{equation}
The twisted embedding translates into the following relation for $\varphi \otimes \overline{\varphi} \in S(V(\A)^{2n})$
\begin{equation}
\omega(\iota(g_1,g_2))(\varphi \otimes \overline{\varphi}) = \omega(g_1)\varphi \cdot \overline{\omega(g_2) \varphi} \ , \ \ g_1, g_2 \in G(\A) \ ,
\end{equation}
where we abuse notation and write $\omega$ for both the Weil representation of $G$ and $\mathrm{Sp}_{2n}$. Therefore the pullback along the twisted embedding $\iota$ of the theta function attached to $\varphi \otimes \overline{\varphi} \in S(V(\A)^{2n})$ has the property
\begin{equation}
\theta_{\varphi \otimes \overline{\varphi}}(\iota(g_1,g_2),h) = \theta_{\varphi}(g_1,h) \, \overline{\theta_{\varphi}(g_2,h)} \ .
\end{equation}

For $s \in \C$, let $I_{2n}(s,\chi)$ be the degenerate principal series representation of $\mathrm{Sp}_{2n}(\A)$ consisting of smooth functions $\Phi(g,s)$ satisfying
\begin{equation}
\Phi(m(a)n(b)g,s) = \chi(\mathrm{det} \, a) |\mathrm{det} \, a|_{\A}^{s+\rho_{2n}} \Phi(g,s) \ , \ \rho_{2n} = \frac{2n+1}{2} 
\end{equation}
for all $m(a) \in M_{2n}(\A)$ and $n(b) \in N_{2n}(\A)$ (where we use the subscript $2n$ to emphasize that here we are considering the Siegel parabolic subgroup of $\mathrm{Sp}_{2n}$). Given $\varphi \in S(V(\A)^n)$, we may define a section $\Phi_{\varphi \otimes \overline{\varphi}}$ of the induced representation $I_{2n}(s,\chi)$ by the formula
\begin{equation} \label{DoubleSection}
\Phi_{\varphi \otimes \overline{\varphi}}(g,s) = \omega(g)(\varphi \otimes \overline{\varphi})(0) \cdot |a(g)|^{s-s_{m,2n}} \ , \ \ s_{m,2n} = \frac{m}{2} - \frac{2n+1}{2}
\end{equation}
where $|a(g)|_{\A} =  |\mathrm{det} \, a |_{\A}$ for $a \in \mathrm{GL}_{2n}(\A)$ in the decomposition $g = m(a) n(b) k \in M_{2n}(\A) N_{2n}(\A) K$ for the maximal compact subgroup $K \subset \mathrm{Sp}_{2n}(\A)$. The corresponding Eisenstein series is the usual sum
\begin{equation} \label{Eisenstein}
E(g,s,\Phi_{\varphi \otimes \overline{\varphi}}) = \sum_{\gamma \in P_{2n}(F) \backslash \Sp_{2n}(F)} \Phi_{\varphi \otimes \overline{\varphi}}(\gamma g , s) \ , \ \ g \in \Sp_{2n}(\A) \ .
\end{equation}
The Eisenstein series $E(g,s,\Phi)$ is absolutely convergent for $\mathrm{Re}(s) > (2n+1)/2$ and has meromorphic continuation to the entire complex plane.

In the convergent range, the Siegel-Weil formula implies that $E(g,s,\Phi_{\varphi \otimes \overline{\varphi}})$ is holomorphic at the special value $s_{m,2n}$ and
\begin{equation}
E(\iota(g_1,g_2),s_{m,2n},\Phi_{\varphi \otimes \overline{\varphi}}) = \frac{\kappa_{m,2n}}{\mathrm{vol} \, [H]} \int_{[H]} \theta_{\varphi}(g_1,h) \, \overline{\theta_{\varphi}(g_2,h)} \, dh
\end{equation}
where
\begin{equation}
s_{m,2n} = \frac{m}{2} - \frac{2n+1}{2} \ \ \text{and} \ \ \kappa_{m,2n} = \left\{ \begin{array}{cl} 1 & \mathrm{if} \ m > 2n+1 \\ 2 & \mathrm{if} \ m \leq 2n+1 \end{array} \right. \ .
\end{equation}
Note that $\kappa_{m,2n} = 2$ in the convergent range only when $V$ is anisotropic.

The Siegel-Weil formula was first proved (for more general dual pairs) by Weil \cite{Weil2} in the case that the critical point $s_{m,2n}$ is in the range of convergence of the Eisenstein series, ie.~$m > 4n+2$. In \cite{KR-SW1} and \cite{KR-SW2}, Kudla and Rallis extended the Siegel-Weil formula for all symplectic-orthogonal dual pairs in the convergent range.

Therefore, in the convergent range, the inner product of theta lifts is given by
\begin{equation} \label{InnerProd2}
\langle \theta_{\varphi^{\vee}}f , \theta_{\varphi^{\vee}} f \rangle = \frac{\mathrm{vol} \, [H]}{\kappa_{m,2n}} \int_{[G \times G]} E(\iota(g_1,g_2),s_{m,2n},\Phi_{\varphi \otimes \overline{\varphi}}) \, \overline{f(g_1)} \, f(g_2) \, dg_1 \, dg_2 \ .
\end{equation}

\subsection{Local Zeta Integrals and $L$-Factors}

Integrals of the form (\ref{InnerProd2}) were studied by Piatetski-Shapiro and Rallis \cite{PSR} in much greater generality using their doubling method to obtain integral representations of standard automorphic $L$-functions attached to representations of classical groups. In the case considered here, their results show that, for $\Phi \in I_{2n}(s,\chi)$ and $f \in \sigma$, the global zeta integral
\begin{equation} \label{Zeta}
Z(s,\Phi,f) = \int_{[G \times G]} E(\iota(g_1,g_2),s,\Phi) \, \overline{f(g_1)} \, f(g_2) \, dg_1 \, dg_2
\end{equation}
reduces to a product of local zeta integrals
\begin{equation}
Z(s,\Phi,f) = \prod_v Z_v(s,\Phi,f) \ .
\end{equation}
The analytic properties of the Eisenstein series imply that $Z(s,\Phi,f)$ also has meromorphic continuation to the entire complex plane. When $f = \otimes_v f_v$ and $\Phi(s) = \otimes_v \Phi_v(s)$ are pure tensors, the local zeta integrals are given by
\begin{equation} \label{LocalZetaInt}
Z_v(s,\Phi,f) = \int_{G(F_v)} \Phi_v(\delta \, \iota(1,g) , s) \, \langle \sigma(g)  f_v , f_v \rangle_v \, dg
\end{equation}
where $\delta \in \Sp_{2n}(\Q)$ is an explicit representative of the `main orbit' (see \cite[p.2-6]{PSR} and \cite[p.76-77]{KR}) and $\langle \, , \rangle = \prod_v \langle \, , \rangle_v$ is a decomposition of the global pairing into a product of local pairings. Furthermore, for all places $v$ where $\sigma_v$ is unramified, the local pairings are normalized so that $\langle f_v^0 , f_v^0 \rangle_v = 1$ for the unramified vector $f_v^0$ used to define the restricted tensor product $\sigma = \otimes_v \sigma_v$.

Let us state the fundamental identity of the doubling method \cite[p.3]{PSR} in this case. Let $S$ be a finite set of places of $F$ containing all archimedean places, all finite places where $\sigma_v$ is ramified and all finite places where $\chi_v$ is ramified. Let $\Phi = \otimes_v \Phi_v \in I_{2n}(s,\chi)$ be a pure tensor such that, for all $v \not\in S$, $\Phi_v$ is $\Sp_{2n}(O_{F_v})$-invariant and normalized so that $\Phi_v(e,s)=1$. Finally, let $f = \otimes_v f_v \in \sigma$ be a pure tensor such that, for all $v \not\in S$, $f_v = f_v^0$ is the unramified vector for which $\langle f_v^0 , f_v^0 \rangle_v = 1$. Then
\begin{equation} \label{PSR}
Z(s,\Phi,f) = \frac{L^S(s+\frac{1}{2},\sigma,\chi)}{b_{2n}^S(s,\chi)} \prod_{v \in S} Z_v(s,\Phi,f)
\end{equation}
where $b_{2n}^S(s,\chi) = \prod_{v \not\in S} b_{2n,v}(s,\chi_v)$ for
\begin{equation}
b_{2n,v}(s,\chi_v) = L_v(s + \rho_{2n},\chi_v) \prod_{k=1}^n \zeta_v(2s+2n+1-2k)
\end{equation}
where $\rho_{2n}=(2n+1)/2$.

\subsection{The Second Term Range}

The \emph{second term range} refers to the case when $G$ and $H$ satisfy:
\begin{equation} \label{SecondTerm}
r > 0 \text{ and } m-r \leq 2n+1 < m \ .
\end{equation}
In the second term range, the theta integral for $\varphi \in S(V(\A)^n)$
\begin{equation}
\int_{[H]} \theta_{\varphi}(g_1,h) \, \overline{\theta_{\varphi}(g_2,h)} \, dh
\end{equation}
no longer converges. However, the inner product of theta lifts $\langle \theta_{\varphi^{\vee}}f , \theta_{\varphi^{\vee}} f \rangle$ is well-defined because of our assumption that both $\sigma$ and $\pi$ are cuspidal.

The Eisenstein series $E(\iota(g_1,g_2),s,\Phi_{\varphi \otimes \overline{\varphi}})$ is still defined as in (\ref{Eisenstein}) and has meromorphic continuation but now may have a pole at the special value $s_{m,2n}$. Recent work of Gan, Qiu and Takeda \cite{GQT} proves the second term identity of the regularized Siegel-Weil formula so that the Rallis inner product formula holds in the second term range as stated in the next section.

\subsection{Rallis Inner Product Formula} \label{Rallis}

The doubling method introduced by Piatetski-Shapiro and Rallis \cite{PSR} and developed by Lapid and Rallis \cite{LR} used the local zeta integrals (\ref{LocalZetaInt}) to define standard local $L$-factors for all admissible representations of simple classical groups and we denote by $L(s,\sigma,\chi)$ the global standard automorphic $L$-function of $\sigma$ twisted by $\chi$. Since the local zeta integrals are equal to local $L$-factors almost everywhere by the fundamental identity (\ref{PSR}), it is common to define the normalized local zeta integrals
\begin{equation} \label{LocalZeta}
Z^*_v(s,\Phi,f) = \frac{ Z_v(s,\Phi,f) }{ L_v(s+\frac{1}{2}, \sigma , \chi) } \ .
\end{equation}
Also, given $\varphi \in S(V(\A)^n)$ and the section $\Phi_{\varphi \otimes \overline{\varphi}}(g,s) \in I_{2n}(s,\chi)$ as in (\ref{DoubleSection}), let us define
\begin{equation}
Z^*_v(s,\varphi,f) = Z^*_v(s,\Phi_{\varphi \otimes \overline{\varphi}},f)
\end{equation}
simply to emphasize the dependence on $\varphi$.

Finally, the Siegel-Weil formula (\ref{InnerProd2}) and the fundamental identity (\ref{PSR}) produce the Rallis inner product formula in the convergent range, and the recent work of Gan, Qiu and Takeda \cite{GQT} establish the Rallis inner product formula in the second term range.

\begin{theorem}[\cite{Weil2}, \cite{PSR}, \cite{KR-SW1}, \cite{KR-SW2}, \cite{GQT}] \label{Rallis}
Let $\sigma$ be a cuspidal representation of $G$ such that such that $\pi = \theta_{\psi}(\sigma)$ is a cuspidal representation of $H$. Suppose $G$ and $H$ are either in the convergent range (cf.~(\ref{Convergent})) or the second term range (cf.~(\ref{SecondTerm})). Then $L(s+1/2, \sigma, \chi)$ is holomorphic at $s=s_{m,2n}$ and given $f \in \sigma$ and $\varphi \in S(V(\A)^n)$ we have
\begin{equation} \label{EqInnerProd}
\langle \theta_{\varphi^{\vee}}f , \theta_{\varphi^{\vee}}f \rangle =  \frac{\mathrm{vol} \ [H]}{\kappa_{m,2n}} \, L(s_{m,2n}+1/2, \sigma, \chi) \, \prod_v Z^*_v(s_{m,2n},\varphi,f)
\end{equation}
where
$$
s_{m,2n} = \frac{m}{2} - \frac{2n+1}{2} \ \ \text{and} \ \ \kappa_{m,2n} = \left\{ \begin{array}{cl} 1 & \mathrm{if} \ m > 2n+1 \\ 2 & \mathrm{if} \ m \leq 2n+1 \end{array} \right. \ .
$$
\end{theorem}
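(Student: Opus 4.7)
The plan is to combine the Siegel-Weil identity with the doubling method of Piatetski-Shapiro and Rallis, handling the convergent and second term ranges separately since the intermediate theta integral on $[H]$ converges only in the former. The overall shape is: rewrite $\langle \theta_{\varphi^\vee}f,\theta_{\varphi^\vee}f\rangle$ as in (\ref{InnerProd}) using cuspidality of $f$, invoke Siegel-Weil to replace the inner $[H]$-integral by $\frac{\mathrm{vol}\,[H]}{\kappa_{m,2n}}E(\iota(g_1,g_2),s_{m,2n},\Phi_{\varphi\otimes\overline\varphi})$, recognise the resulting $[G\times G]$-integral as the global doubling zeta integral $Z(s_{m,2n},\Phi_{\varphi\otimes\overline\varphi},f)$, and then use the fundamental identity (\ref{PSR}) to extract the $L$-function and the product of normalized local integrals $Z_v^*$ defined in (\ref{LocalZeta}).

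In the convergent range each step above is immediately justified. Equation (\ref{InnerProd}) holds by absolute convergence (cuspidality of $f$ and moderate growth of $\theta_\varphi$); the Siegel-Weil identity is Weil's theorem \cite{Weil2} when $m>4n+2$ and the Kudla-Rallis extensions \cite{KR-SW1,KR-SW2} in the remaining convergent cases; and (\ref{PSR}) factors the global zeta integral into $L^S(s+\frac{1}{2},\sigma,\chi)/b_{2n}^S(s,\chi)$ times finitely many local integrals. Dividing each $Z_v$ by $L_v(s+\frac{1}{2},\sigma,\chi)$ to form the normalized factors $Z_v^*$ then rearranges (\ref{PSR}) into the desired identity (\ref{EqInnerProd}). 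Holomorphy of $L(s+\frac{1}{2},\sigma,\chi)$ at $s_{m,2n}$ follows because the left-hand side is a finite inner product and, for a suitable choice of test data, each normalized local factor is finite and nonzero.

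I expect the main obstacle to be the second term range, where the theta integral $\int_{[H]}\theta_\varphi(g_1,h)\overline{\theta_\varphi(g_2,h)}\,dh$ genuinely diverges and the Eisenstein series $E(g,s,\Phi_{\varphi\otimes\overline\varphi})$ typically has a simple pole at $s_{m,2n}$, so neither side of the naive Siegel-Weil identity is defined as a number. The way around this is the regularized Siegel-Weil formula of Kudla-Rallis together with the second term identity proved in full generality by Gan-Qiu-Takeda \cite{GQT}: one replaces $\theta_\varphi$ by a regularized kernel whose Laurent expansion at $s_{m,2n}$ has a polar piece governed by theta functions on a strictly smaller quadratic space and a holomorphic remainder, and the cuspidality of both $\sigma$ and $\pi$ forces the polar contribution to vanish after pairing against $\overline{f(g_1)}f(g_2)$. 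What remains is exactly the inner product of theta lifts on the left and the Eisenstein contribution (with the correct constant $\kappa_{m,2n}$) on the right, after which the doubling argument proceeds exactly as in the convergent case to produce (\ref{EqInnerProd}). The delicate bookkeeping of leading terms underlying this cancellation is precisely the content of \cite{GQT} and is the step where the hard analytic work sits.
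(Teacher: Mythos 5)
Your proposal follows the same route the paper takes: in the convergent range, unfold the inner product as in (\ref{InnerProd}), apply the Siegel--Weil formula of Weil and Kudla--Rallis to get (\ref{InnerProd2}), identify the result with the doubling zeta integral (\ref{Zeta}) and apply the fundamental identity (\ref{PSR}) with the normalization (\ref{LocalZeta}); in the second term range, the paper likewise defers entirely to the regularized Siegel--Weil formula and second term identity of Gan--Qiu--Takeda \cite{GQT}, exactly as you do. The paper presents this theorem as a citation of these known results rather than giving an independent proof, so your sketch matches its intended justification.
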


\subsection{A Special Value Formula}

We may now state our final result. As indicated earlier, our special value formula is simply the combination of the period identity in \autoref{InnerProdThm} and the Rallis inner product formula, \autoref{Rallis}.

\begin{theorem} \label{SpecialValue}
Let $\sigma$ be a cuspidal representation of $G$ such that $\pi = \theta_{\psi}(\sigma)$ is a cuspidal representation of $H$. Suppose $G$ and $H$ are either in the convergent range (cf.~(\ref{Convergent})) or the second term range (cf.~(\ref{SecondTerm})). Fix $f \in \sigma$ and let $\varphi \in S(V(\A)^n)$ such that $\varphi$ pairs purely with $f$. Let $(\varphi , \xi_1 \, ; \bx_1)$ and $(\varphi , \xi_2 \, ; \bx_2)$ be a pair of nondegenerate matching data and let  $T_1 = Q[\bx_1]$ and $T_2 = Q[\bx_2]$. Then
\begin{equation}
\frac{\mathrm{vol} \, [H]}{\kappa_{m,2n}} \left\{ L(s_{m,2n}+1/2, \sigma , \chi) \, \prod_v \frac{Z^*_v(s_{m,2n},\varphi,f)}{ \langle f_v , f_v \rangle_v } \right\} \frac{W_{T_1}(f) \, \overline{W_{T_2}(f)}}{\langle f , f \rangle} = \sum_{F^H \in \mathscr{B}(\pi)} P_{H_{\bx_1}}(R_{\xi_1} R_{\xi_2^{\vee}} F^H) \, \overline{P_{H_{\bx_2}}(F^H)}
\end{equation}
where $\mathscr{B}(\pi)$ is an orthonormal basis of $\pi$, $\xi^{\vee}(h)=\overline{\xi(h^{-1})}$ and
$$
s_{m,2n} = \frac{m}{2} - \frac{2n+1}{2} \ \ \text{and} \ \ \kappa_{m,2n} = \left\{ \begin{array}{cl} 1 & \mathrm{if} \ m > 2n+1 \\ 2 & \mathrm{if} \ m \leq 2n+1 \end{array} \right. \ .
$$
\end{theorem}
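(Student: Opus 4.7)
The theorem is essentially a direct combination of two already-established results, so my plan is to substitute one into the other. The starting point is the inner product identity of Theorem \ref{InnerProdThm}, which states
\begin{equation*}
\frac{\langle \theta_{\varphi^{\vee}}f , \theta_{\varphi^{\vee}}f \rangle}{\langle f , f \rangle} \, \frac{W_{T_1}(f) \, \overline{W_{T_2}(f)}}{\langle f , f \rangle} \ = \sum_{F^H \in \mathscr{B}(\pi)} P_{H_{\bx_1}}(R_{\xi_1} R_{\xi_2^{\vee}} F^H) \, \overline{P_{H_{\bx_2}}(F^H)}
\end{equation*}
under precisely the hypotheses assumed in \autoref{SpecialValue} (fix $f$, choose $\varphi$ pairing purely with $f$, and pick matching data). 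This identity has the ratio $\langle \theta_{\varphi^{\vee}}f , \theta_{\varphi^{\vee}}f \rangle / \langle f , f \rangle$ as its only factor on the left that is not already an explicit Fourier coefficient or period, so the entire task is to evaluate this ratio using $L$-function data.

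Next I would invoke the Rallis inner product formula of \autoref{Rallis}, which is applicable precisely because we have assumed $G$ and $H$ lie either in the convergent range or the second term range. It gives
\begin{equation*}
\langle \theta_{\varphi^{\vee}}f , \theta_{\varphi^{\vee}}f \rangle = \frac{\mathrm{vol} \ [H]}{\kappa_{m,2n}} \, L(s_{m,2n}+1/2, \sigma, \chi) \, \prod_v Z^*_v(s_{m,2n},\varphi,f) \ .
\end{equation*}
I would then take $f = \otimes_v f_v$ to be a pure tensor in $\sigma$, so that the global pairing factors as $\langle f, f \rangle = \prod_v \langle f_v, f_v \rangle_v$, and divide both sides by $\langle f, f \rangle$ to obtain
\begin{equation*}
\frac{\langle \theta_{\varphi^{\vee}}f , \theta_{\varphi^{\vee}}f \rangle}{\langle f , f \rangle} = \frac{\mathrm{vol} \, [H]}{\kappa_{m,2n}} \, L(s_{m,2n}+1/2, \sigma, \chi) \, \prod_v \frac{Z^*_v(s_{m,2n},\varphi,f)}{\langle f_v , f_v \rangle_v} \ .
\end{equation*}

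Substituting this expression into the left side of the identity from \autoref{InnerProdThm} yields precisely the statement of \autoref{SpecialValue}. There is essentially no obstacle here beyond matching conventions: one should verify that the hypothesis that $\varphi$ pairs purely with $f$, required for \autoref{InnerProdThm}, is compatible with the hypotheses needed to apply the Rallis formula (it is, since the Rallis formula holds for arbitrary $f \in \sigma$ and $\varphi \in S(V(\A)^n)$), and one should check that the factorization $\langle f, f\rangle = \prod_v \langle f_v, f_v\rangle_v$ is compatible with the normalization used to define the local zeta integrals $Z^*_v$. Since the normalizations of local pairings in the doubling method were set up in \autoref{SecValues} exactly so that $\langle f_v^0, f_v^0 \rangle_v = 1$ at unramified places and so that the local zeta integrals depend on the choice of decomposition of the global pairing, both checks are routine bookkeeping.
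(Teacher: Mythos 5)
Your proposal is correct and matches the paper's own argument, which is exactly the substitution of the Rallis inner product formula (\autoref{Rallis}) for the ratio $\langle \theta_{\varphi^{\vee}}f , \theta_{\varphi^{\vee}}f \rangle / \langle f , f \rangle$ appearing in the identity of \autoref{InnerProdThm}, with $f$ a pure tensor so that $\langle f , f \rangle = \prod_v \langle f_v , f_v \rangle_v$. No further comment is needed.
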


\bibliographystyle{alpha}
\bibliography{ThetaPeriods}

\end{document}